\definecolor{beige}{rgb}{0.9,0.9,0.7}
\newcommand{\noopsort}[1]{}
\DeclareMathOperator{\val}{val}
\begin{document}

\newtheorem{theo}{Theorem}[section]
\newtheorem{lem}[theo]{Lemma}
\newtheorem{prop}[theo]{Proposition}
\newtheorem{cor}[theo]{Corollary}
\newtheorem{quest}[theo]{Question}
\theoremstyle{definition}
\newtheorem{rem}[theo]{Remark}
\newtheorem{ex}[theo]{Example}
\newtheorem{deftn}[theo]{Definition}
\newtheorem{rmk}[theo]{Remark}

\newcommand{\N}{\mathbb N}
\newcommand{\Z}{\mathbb Z}
\newcommand{\Zp}{\Z_p}
\newcommand{\Q}{\mathbb Q}
\newcommand{\Qp}{\Q_p}
\newcommand{\F}{\mathbb F}
\newcommand{\Fp}{\F_p}
\newcommand{\R}{\mathbb R}
\renewcommand{\O}{\mathcal O}
\newcommand{\m}{\mathfrak m}
\newcommand{\calL}{\mathcal L}

\newcommand{\M}{\text{\tt M}}

\renewcommand{\min}{\text{\rm min}}
\renewcommand{\max}{\text{\rm max}}

\newcommand{\ring}{\mathfrak A}
\newcommand{\fracring}{\mathfrak K}
\renewcommand{\prec}{\text{\rm prec}}
\renewcommand{\val}{\text{\rm val}}
\newcommand{\id}{\text{\rm id}}
\renewcommand{\Res}{\text{\rm Res}}
\newcommand{\lc}{\text{\rm lc}}
\newcommand{\Card}{\text{\rm Card}\:}

\renewcommand{\P}{\mathbb P}
\newcommand{\E}{\mathbb E}
\newcommand{\Var}{\text{\rm Var}}
\newcommand{\Cov}{\text{\rm Cov}}

\newcommand{\lb}{\ensuremath{\llbracket}}
\newcommand{\rb}{\ensuremath{\rrbracket}}
\newcommand{\lp}{(\!(}
\newcommand{\rp}{)\!)}
\newcommand{\col}{\: : \:}

\newcommand{\A}{W}

\def\todo#1{\ \!\!{\color{red} #1}}
\definecolor{purple}{rgb}{0.6,0,0.6}

\def\binom#1#2{\Big(\begin{array}{cc} #1 \\ #2 \end{array}\Big)}

\title{Resultants and subresultants of $p$-adic polynomials}

%\numberofauthors{1}
%\author{
%\alignauthor Xavier Caruso\\
%  \affaddr{Universit\'e Rennes 1}\\
%  \email{\normalsize \textsf{xavier.caruso@normalesup.org}}
%}
\author{Xavier Caruso}

\maketitle

\begin{abstract}
We address the problem of the stability of the computations of resultants 
and subresultants of polynomials defined over complete discrete valuation 
rings (\emph{e.g.} $\Zp$ or $k[[t]]$ where $k$ is a field). We prove that 
Euclide-like algorithms are highly unstable on average and we explain, in
many cases, how one can stabilize them without sacrifying the complexity.
On the way, we completely determine the distribution of the valuation of
the subresultants of two random monic $p$-adic polynomials having the
same degree.
\end{abstract}

\section{Introduction}

As wonderfully illustrated by the success of Kedlaya-type counting 
points algorithms \cite{kedlaya}, $p$-adic technics are gaining nowadays 
more and more popularity in computer science, and more specifically in 
symbolic computation. A crucial issue when dealing with $p$-adics is 
those of stability. Indeed, just like real numbers, $p$-adic numbers are 
by nature infinite and thus need to be truncated in order to fit in the 
memory of a computer. The level of truncation is called the 
\emph{precision}. Usual softwares implementing $p$-adics (\emph{e.g.} 
\textsc{magma} \cite{magma}, \textsc{pari} \cite{pari}, \textsc{sage} 
\cite{sage}) generally tracks the precision as follows: an individual 
precision is attached to any $p$-adic variable and this precision is 
updated after each basic arithmetic operation. This way of tracking 
precision can be seen as the analogue of the arithmetic intervals in the 
real setting. We refer to \S \ref{subsubsec:computDVR} for more details.

In the paper \cite{padicprec}, the authors propose a new framework to 
control $p$-adic precision. The aim of this paper is to illustrate the 
technics of \emph{loc. cit.} on the concrete example of computation of 
\textsc{gcd}s and subresultants of $p$-adic polynomials. There is 
actually a real need to do this due to the combination of two reasons: 
on the one hand, computating \textsc{gcd}s is a very basic operation for 
which it cannot be acceptable to have important instability whereas, on 
the other hand, easy experimentations show that all standard algorithms 
for this task (\emph{e.g.} extended Euclide's algorithm) are \emph{very} 
unstable.
\begin{figure}
\renewcommand{\arraystretch}{1.2}
\begin{center}
\begin{tabular}{|c||c|c|}
\hline
\multirow{2}{*}{\hspace{0.2cm}Degree\hspace{0.2cm}\null} & 
\multicolumn{2}{c|}{\hspace{0.1cm}Loss of precision (in number of significant digits)\hspace{0.1cm}\null}\\
\cline{2-3}
& \hspace{0.5cm}Euclide algorithm\hspace{0.5cm}\null & expected \\
\hline
$5$ & $\phantom{00}6.3$ & $3.1$ \\
$10$ & $\phantom{0}14.3$ & $3.2$ \\
$25$ & $\phantom{0}38.9$ & $3.2$ \\
$50$ & $\phantom{0}79.9$ & $3.2$ \\
$100$ & $160.0$ & $3.2$ \\
\hline
\end{tabular}
\end{center}

\vspace{-0.3cm}

\caption{Average loss of precision when computing the {\sc gcd}
of two random monic polynomial of fixed degree over $\Z_2$.}
\label{fig:precision}

\vspace{-0.3cm}
\end{figure}
Figure \ref{fig:precision} illustrates the instability of the classical
extended Euclide's algorithm (\emph{cf} Algorithm \ref{algo:Euclide})
when it is called on random inputs which are monic $2$-adic 
polynomials of fixed degree (see also Example \ref{ex:Euclide}). 
\begin{algorithm}
  \SetKwInOut{Input}{Input}
  \SetKwInOut{Output}{Output}
  \Input{Two polynomials $A, B \in \Qp[X]$ (whose coefficients are known at given precision)}
  \Output{A triple $D, U, V$ such that $D = AU + BV = \gcd(A,B)$}

  \BlankLine

  $S_1 \leftarrow A$; $U_1 \leftarrow 1$; $V_1 \leftarrow 0$

  $S_2 \leftarrow B$; $U_2 \leftarrow 0$; $V_2 \leftarrow 1$

  $k \leftarrow 2$

  \While{$S_k \neq 0$}
    {$Q, S_{k+1} \leftarrow$ quotient and remainder in the Euclidean
     division of $S_{k-1}$ by $S_k$

     $U_{k+1} \leftarrow U_{k-1} - Q U_k$

     $V_{k+1} \leftarrow V_{k-1} - Q V_k$

     $k \leftarrow k+1$}

  \Return{$S_{k-1}, U_{k-1}, V_{k-1}$}
\caption{Extended Euclide's algorithm}
\label{algo:Euclide}
\end{algorithm}
Looking at the last line, we see that 
extended Euclide's algorithm outputs the B\'ezout coefficients of two 
monic $2$-adic polynomials of degree $100$ with an average loss of 
$160$ significant digits by coefficient whereas a stable 
algorithm should only loose $3.2$ digits on average. This 
``theoretical'' loss is computed as the double of the valuation of the 
resultant. Indeed Cramer-like formulae imply that B\'ezout coefficients 
can be computed by performing a unique division by the resultant, 
inducing then only the aforementioned loss of precision (see 
\S \ref{subsubsec:computDVR}, Eq.~\eqref{eq:precdiv} for a full
justification). Examining the 
table a bit more, we observe that the ``practical'' loss of 
precision due to Euclide's algorithm seems to grow linearly with respect 
to the degree of the input polynomials whereas the ``theoretical'' loss 
seems to be independant of it. In other words, the instability of 
Euclide's algorithm is becoming more and more critical when the degree 
of the input increases.

\paragraph{Content of the paper}

The aim of this article is twofold. We first provide in \S 
\ref{sec:unstable} a theoretical study of the instability phenomenon 
described above and give strong evidences that the loss of precision 
grows linearly with respect to the degree of the input polynomials, as we 
observed empirically. In doing so, we determine the distribution of the 
valuation of the subresultants of random monic polynomials over $\Zp$ 
(\emph{cf} Theorem \ref{th:lawVj}). This is an independant result which 
has its own interest.

Our second goal, which is carried out in \S \ref{sec:stable}, is to rub 
out these unexpected losses of precision. Making slight changes to the 
standard subresultant pseudo-remainder sequence algorithm and using in 
an essential way the results of \cite{padicprec}, we manage to design a stable 
algorithm for computing all subresultants of two monic polynomials over 
$\Zp$ (satisfying an additional assumption). This basically allows 
to stably compute \textsc{gcd}s assuming that the degree of the 
\textsc{gcd} is known in advance.

\paragraph{Notations}

Figure~\ref{fig:notations} summerizes the main notations used in
this paper. The definitions of many of them will be recalled in 
\S \ref{subsec:setting}.

\begin{figure}
\begin{center}
\begin{tabular}{rcl}
$\ring$ &--- & a commutative ring (without any further assumption) \\
$\A$ &---& a complete discrete valuation ring \\
$\pi$ &---& a uniformizer of $\A$ \\
$K$ &---& the fraction field of $\A$ \\
$k$ &---& the residue field of $\A$ \smallskip \\

$\ring_{<n}[X]$ &---& the free $\ring$-module consisting of
polynomials over $\ring$ of degree $<n$ \\
$\ring_{\leq n}[X]$ &---& the free $\ring$-module consisting of
polynomials over $\ring$ of degree $\leq n$ \\
$\ring_n[X]$ &---& the affine space consisting of \emph{monic}
polynomials over $\ring$ of degree $n$. \smallskip \\

$\Res^{d_A,d_B}(A,B)$ &---& The resultant of $A$ and
$B$ ``computed in degree $(d_A,d_B)$'' \\
$\Res_j^{d_A,d_B}(A,B)$ &---& The $j$-th subresultant of $A$ and
$B$ ``computed in degree $(d_A,d_B)$'' \\
\end{tabular}
\end{center}

\vspace{-0.2cm}

\caption{Notations used in the paper}
\label{fig:notations}
\end{figure}

\section{The setting}
\label{subsec:setting}

The aim of this section is to introduce the setting we shall work in 
throughout this paper (which is a bit more general than those considered 
in the introduction).

\subsection{Complete discrete valuation rings}

\begin{deftn}
A \emph{discrete valuation ring} (DVR for short) is a domain $\A$
equipped with a map $\val : \A \to \Z \cup \{+\infty\}$ --- the 
so-called \emph{valuation} --- satisfying the four axioms: 
\begin{enumerate}
\item $\val(x) = +\infty$ iff $x = 0$
\item $\val(xy) = \val(x) + \val(y)$
\item $\val(x+y) \geq \min(\val(x), \val(y))$
\item any element of valuation $0$ is invertible.
\end{enumerate}
\end{deftn}

Throughout this paper, we fix a discrete valuation ring $\A$ and assume 
that the valuation on it is normalized so that it takes the value $1$. 
We recall that $\A$ admits a unique maximal ideal $\m$, consisting of 
elements of positive valuation. This ideal is principal and 
generated by any element of valuation $1$. Such an element is called a 
\emph{uniformizer}. Let us fix one of them and denote it by $\pi$. The 
\emph{residue field} of $\A$ is the quotient $\A/\m = \A/\pi\A$ and we 
shall denote it by $k$.

The valuation defines a distance $d$ on $\A$ by letting $d(x,y) = 
e^{-\val(x-y)}$ for all $x, y \in \A$. We say that $\A$ is 
\emph{complete} if it is complete with respect to $d$, in the sense that 
every Cauchy sequence converges. Assuming that $\A$ is complete, any 
element $x \in \A$ can be written uniquely as a convergent series:
\begin{equation}
\label{eq:expandCDVR}
x = x_0 + x_1 \pi + x_2 \pi^2 + \cdots + x_n \pi^n + \cdots
\end{equation}
where the $x_i$'s lie in a fixed set $S$ of representatives of classes
modulo $\pi$. Therefore, as an additive group, $\A$ is isomorphic to 
the set of sequences $\N \to k$. On the contrary, the multiplicative
structure may vary.

Let $K$ denote the fraction field of $\A$. The valuation $v$ extends 
uniquely to $K$ by letting $\val(\frac x y) = \val(x) - \val(y)$. 
Moreover, it follows from axiom 4 that $K$ is obtained from $\A$ by 
inverting $\pi$. Thus, any element of $K$ can be uniquely written as an 
infinite sum:
\begin{equation}
\label{eq:expandCDVF}
x = \sum_{i=i_0}^\infty x_i \pi^i
\end{equation}
where $i_0$ is some relative integer and the $x_i$'s are as above. The 
valuation of $x$ can be easily read off this writing: it is the smallest 
integer $i$ such that $x_i \not\equiv 0 \pmod \pi$.

\subsubsection{Examples}

A first class of examples of discrete valuation rings 
are rings of formal power series over a field. They are equipped with 
the standard valuation defined as follows: $\val(\sum_{i \geq 0} a_i 
t^i)$ is the smallest integer $i$ with $a_i \neq 0$. The corresponding 
distance on $k[[t]]$ is complete. Indeed, denoting by $f[i]$ the term in 
$t^i$ in a series $f \in k[[t]]$, we observe that a sequence $(f_n)_{n 
\geq 0}$ is Cauchy if and only if the sequences $(f_n[i])_{n \geq 0}$ 
are all ultimately constant. A Cauchy sequence $(f_n)_{n \geq 0}$ 
therefore converges to $\sum_{i \geq 0} a_i t^i$ where $a_i$ is the 
limit of $f_n[i]$ when $n$ goes to $+\infty$.
The DVR $k[[t]]$ has a distinguished uniformizer, namely $t$. Its 
maximal ideal is then the principal ideal $(t)$ and its residue field
is canonically isomorphic to $k$. 
If one chooses $\pi = t$ and constant polynomials as representatives of 
classes modulo $t$, the expansion \eqref{eq:expandCDVR} is nothing but 
the standard writing of a formal series.
The fraction field of $k[[t]]$ is the ring of Laurent series over $k$
and, once again, the expansion \eqref{eq:expandCDVF} corresponds to the
usual writing of Laurent series.

\medskip

The above example is quite important because it models all complete 
discrete valuation rings of equal characteristic, \emph{i.e.} whose 
fraction field and residue field have the same characteristic. On the 
contrary, in the mixed characteristic case (\emph{i.e.} when the 
fraction field has characteristic $0$ and the residue field has positive 
characteristic), the picture is not that simple.
Nevertheless, one can construct several examples and, among them, the 
most important is certainly the ring of $p$-adic integers $\Zp$ (where 
$p$ is a fixed prime number). It is defined as the projective limit of 
the finite rings $\Z/p^n\Z$ for $n$ varying in $\N$. In concrete terms, 
an element of $\Zp$ is a sequence $(x_n)_{n \geq 0}$ with $x_n \in 
\Z/p^n\Z$ and $x_{n+1} \equiv x_n \pmod{p^n}$. The addition (resp. 
multiplication) on $\Zp$ is the usual coordinate-wise addition (resp. 
multiplication) on the sequences. The $p$-adic valuation of $(x_n)_{n 
\geq 0}$ as above is defined as the smallest integer $i$ such that $x_i 
\neq 0$. We can easily check that $\Zp$ equipped with the $p$-adic 
valuation satisfies the four above axioms and hence is a DVR. A
uniformizer of $\Zp$ is $p$ and its residue field is $\Z/p\Z$. A
canonical set of representatives of classes modulo $p$ is $\{0, 1,
\ldots, p-1\}$.

Given a $p$-adic integer $x = (x_n)_{n \geq 0}$, the $i$-th digit of 
$x_n$ in $p$-basis is well defined as soon as $i<n$ and the 
compatibility condition $x_{n+1} \equiv x_n \pmod{p^n}$ implies that 
it does not depend on $n$. As a consequence, a $p$-adic integer can 
alternatively be represented as a ``number'' written in $p$-basis having 
an infinite number of digits, that is a formal sum of the shape:
\begin{equation}
\label{eq:expandZp}
a_0 + a_1 p + a_2 p^2 + \cdots + a_n p^n + \cdots
\quad \text{with } a_i \in \{0, 1, \ldots, p-1\}.
\end{equation}
Additions and multiplications can be performed on the above writing
according to the rules we all studied at school (and therefore taking
care of carries). Similarly to the equal characteristic case, we prove
that $\Zp$ is complete with respect to the distance associated to the 
$p$-adic valuation. The writing \eqref{eq:expandZp} corresponds to the 
expansion \eqref{eq:expandCDVR} provided that we have chosen $\pi = p$ 
and $S = \{0, 1, \ldots, p-1\}$.
The fraction field of $\Zp$ is denoted by $\Qp$.

\subsubsection{Symbolic computations over DVR}
\label{subsubsec:computDVR}

We now go back to a general complete discrete valuation ring $\A$, 
whose fraction field is still denoted by $K$.
The memory of a computer being necessarily finite, it is not possible to 
represent exhaustively all elements of $\A$. Very often, mimicing 
what we do for real numbers, we choose to truncate the expansion 
\eqref{eq:expandCDVR} at some finite level. Concretely, this means 
that we work with approximations of elements of $\A$ of the form
\begin{equation}
\label{eq:approx}
x = \sum_{i=0}^{N-1} x_i \pi^i + O(\pi^N) \quad \text{with } N \in \N
\end{equation}
where the notation $O(\pi^N)$ means that the $x_i$'s with $i \geq N$ are 
not specified.

\begin{rem}
\label{rem:balls}
From a theoretical point of view, the expression~\eqref{eq:approx}
does not represent a single element $x$ of $\A$ but an open ball in
$\A$, namely the ball of radius $e^{-N}$ centered at $\sum_{i=0}^{N-1} 
x_i \pi^i$ (or actually any element congruent to it modulo $\pi^N$).
In other words, on a computer, we cannot work with actual $p$-adic
numbers and we replace them by balls which are more tractable (at
least, they can be encoded by a finite amount of information).
\end{rem}

The integer $N$ appearing in Eq.~\eqref{eq:approx} is the so-called 
\emph{absolute precision} of $x$. The \emph{relative precision} of $x$ 
is defined as the difference $N-v$ where $v$ denotes the valuation of 
$x$. Continuing the comparison with real numbers, the relative precision 
corresponds to the number of significant digits since $x$ can be 
alternatively written:
$$x = p^v \sum_{j=0}^{N-v-1} y_j \pi^j + O(\pi^N)
\quad \text{with } y_j = x_{j+v} \text{ and } y_0 \neq 0.$$
Of course, it may happen that all the $x_i$'s ($0 \leq i < N$) vanish,
in which case the valuation of $x$ is undetermined. In this particular
case, the relative precision of $x$ is undefined.

There exist simple formulas to following precision after each single
elementary computation. For instance, basic arithmetic operations can
be handled using:
\begin{align}
\big(a + O(\pi^{N_a})\big) + \big(b + O(\pi^{N_b})\big) 
& = a + b + O(\pi^{\min(N_a,N_b)}), 
\label{eq:precadd} \\
\big(a + O(\pi^{N_a})\big) - \big(b + O(\pi^{N_b})\big) 
& = a - b + O(\pi^{\min(N_a,N_b)}), 
\label{eq:precsub} \\
\big(a + O(\pi^{N_a})\big) \times \big(b + O(\pi^{N_b})\big) 
& = ab + O(\pi^{\min(N_a + \val(b), N_b + \val(a))}).
\label{eq:precmul} \\
\big(a + O(\pi^{N_a})\big) \div \big(b + O(\pi^{N_b})\big) 
& = \frac a b + O(\pi^{\min(N_a - \val(b), N_b + \val(a) - 2 \val(b))}).\footnotemark
\label{eq:precdiv} 
\end{align}

\footnotetext{We observe that these formulas can be rephrased as follows: 
the absolute (resp. relative) precision on the result of a sum or a
substraction (resp. a product or a division) is the minimum of the 
absolute (resp. relative) precisions on .}

\noindent
with the convention that $\val(a) = N_a$ (resp. $\val(b) = N_b$) if all
known digits of $a$ (resp. $b$) are zero.
Combining these formulas, one can track the precision while executing 
any given algorithm. This is the analogue of the standard \emph{interval 
arithmetic} over the reals. Many usual softwares (as \textsc{sage}, 
\textsc{magma}) implement $p$-adic numbers and formal series this way. 
We shall see later that this often results in overestimating the losses 
of precision.

\begin{ex}
\label{ex:Euclide}
As an illustration, let us examine the behaviour of the precision
on the sequence $(R_i)$ while executing Algorithm~\ref{algo:Euclide} 
with the input:
\begin{align*}
A & = X^5 + 
      \big(27 + O(2^5)\big) X^4 +
      \big(11 + O(2^5)\big) X^3 +
      \big(5 + O(2^5)\big) X^2 +
      \big(18 + O(2^5)\big) X +
      \big(25 + O(2^5)\big) \\
B & = X^5 + 
      \big(24 + O(2^5)\big) X^4 +
      \big(25 + O(2^5)\big) X^3 +
      \big(12 + O(2^5)\big) X^2 +
      \big(3 + O(2^5)\big) X +
      \big(10 + O(2^5)\big).
\end{align*}
The remainder in the Euclidean division of $A$ by $B$ is $S_3 = A-B$.
According to Eq.~\eqref{eq:precsub},
we do not loose precision while performing this substraction and the
result we get is:
$$S_3 = 
      \big(3 + O(2^5)\big) X^4 +
      \big(18 + O(2^5)\big) X^3 +
      \big(25 + O(2^5)\big) X^2 +
      \big(15 + O(2^5)\big) X +
      \big(15 + O(2^5)\big).$$ 
In order to compute $S_4$, we have now to perform the Euclidean division 
of $S_2 = B$ by $S_3$. Noting that the leading coefficient of $S_2$ has 
valuation $0$ and using Eq~\eqref{eq:precadd}--\eqref{eq:precdiv}, we 
deduce that this operation does not loose precision again. We get:
$$S_4 = 
      \big(26 + O(2^5)\big) X^3 +
      \big(17 + O(2^5)\big) X^2 +
      \big(4 + O(2^5)\big) X +
      \big(16 + O(2^5)\big).$$
We observe now that the leading coefficient of $S_4$ has valuation $1$. 
According to Eq.~\eqref{eq:precdiv}, divising by this coefficient ---
and therefore \emph{a fortioti} computing the euclidean division of 
$S_3$ by $S_4$ --- will result in loosing at least one digit in
relative precision. The result we find is:
$$S_5 = 
      \underbrace{\big(\textstyle \frac 3 4 + O(2^2)\big)}_{\text{rel. prec.} = 4} X^2 +
      \underbrace{\big(6 + O(2^3)\big)}_{\text{rel. prec.} = 2} X +
      \underbrace{\big(3 + O(2^3)\big)}_{\text{rel. prec.} = 3}.$$
Continuing this process, we obtain:
$$S_6 = 
      \big(20 + O(2^5)\big) X +
      \big(12 + O(2^5)\big) 
\quad \text{and} \quad
S_7 =
      \textstyle \frac 7 4 + O(2).$$
The relative precision on the final result $S_7$ is then $3$, which is
less than the initial precision which was $5$.
\end{ex}

\subsection{Subresultants}
\label{subsec:subres}

A first issue when dealing with numerical computations of \textsc{gcd}s 
of polynomials over $\A$ is that the \textsc{gcd} function is not 
continuous: it takes the value $1$ on an open dense subset without being 
constant. This of course annihilates any hope of computing \textsc{gcd}s 
of polynomials when only approximations of them are known. Fortunately, 
there exists a standard way to recover continuity in this context: it 
consists in replacing \textsc{gcd}s by subresultants which 
are playing an analoguous role. For this reason, in what follows, we will 
exclusively consider the problem of computing subresultants.

\subsubsection*{Definitions and notations}

We recall briefly basic definitions and results about resultants and 
subresultants. For a more complete exposition, we refer to \cite[\S 
4.2]{real}, \cite[\S 3.3]{cohen} and \cite[\S 4.1]{winkler}. Let $\ring$ 
be an arbitrary ring and let $A$ and $B$ be two polynomials with 
coefficients in $\ring$. We pick in addition two integers $d_A$ and 
$d_B$ greater than or equal to the degree of $A$ and $B$ respectively. 
We consider the Sylvester application:
$$\begin{array}{rcl}
\psi : \quad
\ring_{< d_B}[X] \times \ring_{< d_A}[X] & \to &
\ring_{< d_A+d_B}[X] \smallskip \\
(U,V) & \mapsto & AU + BV
\end{array}$$
where $\ring_{<d}[X]$ refers to the finite free $\ring$-module of
rank $d$ consisting of polynomials over $\ring$ of degree strictly 
less than $d$.
The Sylvester matrix is the matrix of $\psi$ in the canonical ordered
basis, which are
$$\begin{array}{rl}
((X^{d_B-1},0), \ldots, (X,0), (1,0), (0,X^{d_A-1}), \ldots, (0,1)) &
\text{for the source} \smallskip \\
\text{and} \quad (X^{d_A+d_B-1}, \ldots, X, 1) &
\text{for the target.}
\end{array}$$
The \emph{resultant} of $A$ and $B$ (computed in degree $d_A, d_B$) is the 
determinant of the $\psi$; we denote it by $\Res^{d_A,d_B}(A,B)$. We 
observe that it vanishes if $d_A > \deg A$ or $d_B > \deg B$. In what 
follows, we will freely drop the exponent $d_A, d_B$ if $d_A$ and $d_B$ 
are the degrees of $A$ and $B$ respectively.
Using Cramer formulae, we can build polynomials $U^{d_A, d_B}(A,B) \in
\ring_{< d_B}[X]$ and $V^{d_A, d_B}(A,B) \in \ring_{< d_A}[X]$ 
satisfying the two following conditions:
\begin{enumerate}[i)]
\item their coefficients are, up to a sing, maximal minors of the 
Sylvester matrix, and
\item $A \cdot U^{d_A, d_B}(A,B) + B \cdot V^{d_A, d_B}(A,B) = 
\Res^{d_A, d_B}(A,B)$.
\end{enumerate}
These polynomials are called the \emph{cofactors} of $A$ and $B$
(computed in degree $d_A, d_B$).

\medskip

The subresultants are defined in the similar fashion. Given an
integer $j$ in the range $[0, d)$ where $d = \min(d_A, d_B)$, we
consider the ``truncated'' Sylvester application:
$$\begin{array}{rcl}
\psi_j : \quad
\ring_{< d_B-j}[X] \times \ring_{< d_A-j}[X] & \to &
\ring_{< d_A+d_B-j}[X]/\ring_{< j}[X] \smallskip \\
(U,V) & \mapsto & AU + BV.
\end{array}$$
Its determinant (in the canonical basis) is the $j$-th \emph{principal 
subresultant} of $A$ and $B$ (computed in degree $d_A, d_B$). Just 
as before, we can construct polynomials $U_j^{d_A, d_B}(A,B) \in
\ring_{< d_B-j}[X]$ and $V_j^{d_A, d_B}(A,B) \in \ring_{< d_A-j}[X]$
such that:
\begin{enumerate}[i)]
\item their coefficients are, up to a sing, maximal minors of the 
Sylvester matrix\footnote{Indeed, observe that the matrix of $\psi_j$ is a
submatrix of the Sylvester matrix.}, and
\item $A \cdot U_j^{d_A, d_B}(A,B) + B \cdot V_j^{d_A, d_B}(A,B) 
\equiv \det \psi_j \pmod {\ring_{< j}[X]}$.
\end{enumerate}
We set $R_j^{d_A, d_B}(A,B) = A \cdot U_j^{d_A, d_B}(A,B) + B \cdot 
V_j^{d_A, d_B}(A,B)$: it is the $j$-th \emph{subresultant} of $A$
and $B$ (computed in degree $d_A, d_B$). The above congruence implies
that $R_j^{d_A, d_B}(A,B)$ has degree at most $j$ and that its
coefficient of degree $j$ is the $j$-th principal subresultant of $A$
and $B$. As before, we freely drop the exponent $d_A, d_B$ when
$d_A$ and $d_B$ are equal to the degrees of $A$ and $B$ respectively.
When $j=0$, the application $\psi_j$ is nothing but $\psi$. Therefore,
$\Res_0^{d_A, d_B}(A,B) = \Res^{d_A, d_B}(A,B)$ and, similarly, the
cofactors agree: we have $U_0^{d_A, d_B}(A,B) = U^{d_A, d_B}(A,B)$
and $V_0^{d_A, d_B}(A,B) = V^{d_A, d_B}(A,B)$.

We recall the following very classical result.

\begin{theo}
We assume that $\ring$ is a field. Let $A$ and $B$ be two polynomials
with coefficients in $\ring$. Let $j$ be the smallest integer such
that $\Res_j(A,B)$ does not vanish. Then $\Res_j(A,B)$ is a \textsc{gcd}
of $A$ and $B$.
\end{theo}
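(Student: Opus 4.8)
The plan is to connect the subresultant $\Res_j(A,B)$ with the image of the truncated Sylvester map $\psi_j$ and then with the classical structure of the Euclidean algorithm over a field. First I would observe that, since $j$ is the smallest integer with $\Res_j(A,B) \neq 0$, the map $\psi_j$ has rank equal to the full dimension of its target, hence is surjective; equivalently, the Sylvester-type matrix attached to $\psi_j$ has a nonzero maximal minor. From the defining relation $A \cdot U_j + B \cdot V_j = R_j$ with $\deg R_j \leq j$ and leading coefficient of degree exactly $j$ equal to $\Res_j(A,B) \neq 0$, we get that $R_j$ is a nonzero polynomial of degree exactly $j$ that lies in the ideal $(A,B)$. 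Therefore $\gcd(A,B)$ divides $R_j$, so $\deg \gcd(A,B) \leq j$.

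Next I would prove the reverse inequality, namely that $\deg \gcd(A,B) \geq j$, equivalently that $\Res_i(A,B) = 0$ forces $\deg \gcd(A,B) > i$ for every $i < j$. The key point is that $\Res_i(A,B) = \det \psi_i$, and $\det \psi_i = 0$ means $\psi_i$ is not an isomorphism between two free modules of the same rank over the field $\ring$, hence not injective: there exists a nonzero pair $(U,V)$ with $\deg U < d_B - i$, $\deg V < d_A - i$ and $AU + BV \in \ring_{<i}[X]$. Writing $g = \gcd(A,B)$, $A = g A_1$, $B = g B_1$ with $\gcd(A_1,B_1)=1$, the relation $AU + BV = W$ with $\deg W < i$ gives $g \mid W$; if $W \neq 0$ this already yields $\deg g < i$, and if $W = 0$ then $A_1 U = -B_1 V$, so $B_1 \mid U$ and a degree count ($\deg U < d_B - i \leq \deg B - i \leq \deg B_1 + (\deg g - i)$... ) shows $\deg g > i$ unless $U = V = 0$. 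The bookkeeping here is where I expect to have to be careful: one must handle the degenerate case $d_A > \deg A$ or $d_B > \deg B$ (where the resultant is identically zero and the statement is about the genuine degrees), and I would reduce to the case $d_A = \deg A$, $d_B = \deg B$ by noting that the excerpt allows dropping the superscript exactly in that situation.

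Assembling the two inequalities gives $\deg \gcd(A,B) = j$ together with the divisibility $\gcd(A,B) \mid R_j(A,B)$; since $R_j(A,B)$ also has degree exactly $j$, the two polynomials are associates in $\ring[X]$, i.e. $R_j(A,B)$ is a $\gcd$ of $A$ and $B$. The main obstacle, as noted, is the careful degree accounting in the kernel argument and the treatment of the boundary cases $A = 0$, $B = 0$, or $\deg A \neq d_A$; away from those, the argument is the standard correspondence between vanishing of principal subresultants and the degree sequence in Euclide's algorithm. An alternative, perhaps cleaner, route is to invoke directly the classical specialization property of subresultants under the Euclidean remainder sequence: over a field, the $S_k$ produced by Algorithm~\ref{algo:Euclide} are, up to nonzero scalars, precisely the nonzero subresultants, and the last nonzero one is by construction a $\gcd$; I would likely present the self-contained linear-algebra proof but remark on this second viewpoint.
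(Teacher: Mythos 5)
The paper presents this theorem as ``very classical'' with no proof, so there is nothing in the source to compare against; I can only assess the correctness of your argument on its own terms. Your first half, establishing $\deg\gcd(A,B) \le j$ from $g\mid R_j$ and $\deg R_j \le j$, is sound — but you should not assert $\deg R_j = j$ at this stage: the hypothesis $\Res_j(A,B)\neq 0$ says the polynomial $R_j$ is nonzero, which by itself does not force its top coefficient $\det\psi_j$ to be nonzero. The genuine problem is in the reverse inequality. The claim ``$\Res_i(A,B)=0$ forces $\deg\gcd(A,B)>i$,'' read as an implication from a \emph{single} vanishing, is false: with $A=X^5$, $B=X^5+X^2$ one has $\gcd(A,B)=X^2$ of degree $2$, yet $R_3=0$, and $2\not>3$. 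Your own case split exposes this: in the branch $W\neq 0$ you correctly derive $\deg g < i$, which is the opposite of what you need, and the write-up never explains why this branch cannot occur. As presented, the argument does not close.

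The repair is to run the two directions the other way around. Put $\ell = \deg\gcd(A,B)$ and show: (a) for every $i<\ell$ one has $R_i=0$, which is immediate since $g\mid R_i = AU_i + BV_i$ while $\deg R_i \le i < \ell = \deg g$; and (b) $R_\ell\neq 0$, by proving $\psi_\ell$ injective — this is precisely your $W=0$ computation carried out at $i=\ell$: from $AU+BV=W$ with $\deg W < \ell$ and $g\mid W$ one gets $W=0$, hence $A_1U=-B_1V$ and $B_1\mid U$ with $\deg U < d_B-\ell = \deg B_1$, forcing $U=V=0$. Points (a) and (b) together give $j=\ell$, that $\det\psi_j\neq 0$ so $\deg R_j = j$, and that $R_j$ is an associate of $g$, completing the proof. (This is also what your ``alternative route'' via the remainder sequence amounts to, and is what Proposition~\ref{prop:prem} encodes, but it would need to be actually carried out rather than merely invoked.)
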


Since they are defined as determinants, subresultants behave well with 
respect to base change: if $f : \ring \to \ring'$ is a morphism of rings 
and $A$ and $B$ are polynomials over $\ring$ then $\Res_j^{d_A,d_B} 
(f(A),f(B)) = f\big(\Res_j^{d_A,d_B}(A,B)\big)$ where $f(A)$ and $f(B)$ 
denotes the polynomials deduced from $A$ and $B$ respectively by 
applying $f$ coefficient-wise. This property is sometimes referred to as 
the \emph{functoriality} of subresultants.
We emphasize that, when $f$ is not injective, the relation 
$\Res_j(f(A),f(B)) = f\big(\Res_j(A,B)\big)$ does \emph{not} hold in 
general since applying $f$ may decrease the degree. Nevertheless, if
$d_A$ and $d_B$ remained fixed, this issue cannot happen.

\subsubsection*{The subresultant pseudo-remainder sequence}

When $\ring$ is a domain, there exists a standard nice Euclide-like
reinterpreation of subresultants, which provides in particular an 
efficient algorithm for computing them. Since it will play an important 
role in this paper, we take a few lines to recall it.

This reinterpretation is based on the so-called \emph{subresultant 
pseudo-remainder sequence} which is defined as follows. We pick $A$ and 
$B$ as above. Denoting by $(P \,\%\, Q)$ 
the remainder in the Euclidean division of $P$ by $Q$, we define two 
recursive sequences $(S_i)$ and $(c_i)$ as follows:
\begin{equation}
\label{eq:subressequence}
\left\{\begin{array}{ll}
S_{-1} = A, \, S_0 = B, \, c_{-1} = 1 \smallskip \\
\displaystyle S_{i+1} = (-s_i)^{\varepsilon_i + 1}
s_{i-1}^{-1} \: c_i^{-\varepsilon_i} \cdot (S_{i-1} \,\%\, S_i)
& \text{for } i \geq 0 \smallskip \\
\displaystyle c_{i+1} = s_{i+1}^{\varepsilon_{i+1}} \cdot c_i^{1-\varepsilon_{i+1}}
& \text{for } i \geq -1. 
\end{array}\right.
\end{equation}
Here $n_i = \deg S_i$, $\varepsilon_i = n_{i+1} - n_i$ and $s_i$ is the 
leading coefficient of $S_i$ if $i \geq 0$ and $s_{-1} = 1$ by 
convention. These sequences are finite and the above recurrence applies 
until $S_i$ has reached the value $0$.

\begin{prop}
\label{prop:prem}
With the above notations, we have:
$$\begin{array}{rcll}
\Res_j(A,B) & = & S_i & \text{if } j = n_{i-1} - 1 \smallskip \\
& = & 0 & \text{if } n_i < j < n_{i-1} - 1 \\
& = & \!\big(\frac{s_i}{s_{i-1}}\big)^{\varepsilon_i-1} \cdot S_i & 
\text{if } j = n_i
\end{array}$$
for all $i$ such that $S_i$ is defined.
\end{prop}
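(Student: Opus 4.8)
The plan is to establish Proposition~\ref{prop:prem} by relating the subresultant pseudo-remainder sequence to the classical \emph{subresultant chain}, and then invoking the well-known structure theorem for that chain (\emph{cf} \cite{real,cohen,winkler}). Recall that for polynomials over a domain, the ``defective'' and ``non-defective'' subresultants satisfy a specific block structure: if the degree sequence $n_{-1} > n_0 > n_1 > \cdots$ records the successive degrees, then $\Res_j(A,B) = 0$ for every $j$ strictly between two consecutive ``non-defective'' indices (except the one just below $n_{i-1}$), the subresultant $\Res_{n_{i-1}-1}$ is (up to a unit) the next remainder-like polynomial, and $\Res_{n_i}$ differs from it by the expected power of a ratio of leading coefficients. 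So the content of the proposition is exactly that the normalization constants in \eqref{eq:subressequence} have been chosen precisely so that $S_i$ equals $\Res_{n_{i-1}-1}(A,B)$ on the nose.

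First I would prove, by induction on $i$, the identity $S_i = \Res_{n_{i-1}-1}^{d_A,d_B}(A,B)$ where initially $d_A = \deg A$, $d_B = \deg B$, together with a companion identity expressing $c_i$ in terms of leading coefficients (namely $c_i = s_i^{\varepsilon_i} \cdots$ telescoped appropriately, matching the ``$h_i$'' quantities in the classical treatment). The base case uses $S_{-1} = A = \Res_{?}$ and $S_0 = B$, handled by the convention $c_{-1}=1$, $s_{-1}=1$. For the inductive step I would feed the two previous subresultants into the \emph{fundamental subresultant relation} (the Euclidean-like recurrence satisfied by the subresultant chain itself, sometimes attributed to Collins/Brown--Traub), which says that the pseudo-remainder of two consecutive non-defective subresultants, after dividing by an explicit product of leading-coefficient powers, yields the next non-defective subresultant. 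Matching the exponents $\varepsilon_i+1$ on $-s_i$, the $s_{i-1}^{-1}$, and the $c_i^{-\varepsilon_i}$ against those appearing in that classical relation is the crux of the argument.

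Once $S_i = \Res_{n_{i-1}-1}(A,B)$ is known for all $i$, the three cases of the proposition follow immediately: the first case is the identity itself; the second (vanishing for $n_i < j < n_{i-1}-1$) is the standard fact that subresultants of intermediate ``gap'' degrees vanish, together with the observation that $\deg S_{i+1} = n_{i+1} \le n_i$ so no non-defective subresultant lives in that range; and the third case, $\Res_{n_i}(A,B) = (s_i/s_{i-1})^{\varepsilon_i-1} S_i$, is the classical relation between a defective subresultant $\Res_{n_i}$ and its non-defective partner $\Res_{n_{i-1}-1}$ — here one must be slightly careful that when $\varepsilon_i = 1$ (the ``normal'' case) the factor is trivial and $\Res_{n_i} = S_i$, consistent with $n_i = n_{i-1}-1$. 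Finally I would check that replacing the true degrees $d_A,d_B$ by the generic ones causes no trouble, using the functoriality/base-change remarks and the fact that leading coefficients of $A$ and $B$ are units when $\ring$ is a domain (or simply that the pseudo-remainder sequence only involves exact divisions by the relevant $s_i$).

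The main obstacle I anticipate is bookkeeping the exponents in the normalization constants: the recurrence \eqref{eq:subressequence} packs the classical subresultant relation into a compact but somewhat opaque form via the auxiliary sequence $(c_i)$, and verifying that $(-s_i)^{\varepsilon_i+1} s_{i-1}^{-1} c_i^{-\varepsilon_i}$ is \emph{exactly} the inverse of the coefficient by which $(S_{i-1}\,\%\,S_i)$ must be scaled requires carefully telescoping the $c_i$ recurrence and tracking signs coming from the $(-1)$'s in the Euclidean remainder relation. A secondary subtlety is the degenerate situation where some $S_i$ or intermediate subresultant vanishes prematurely (the \textsc{gcd} has positive degree), which must be folded into the induction as a terminal case rather than breaking it.
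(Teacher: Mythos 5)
The paper does not actually prove Proposition~\ref{prop:prem}: it is presented as a recalled classical result (the surrounding text says ``we take a few lines to recall it''), and the reader is pointed to the standard references \cite{real}, \cite{cohen}, \cite{winkler}. There is therefore no paper proof to compare against; your sketch is a reasonable outline of the classical argument one finds in those references. You have correctly identified the two essential ingredients: the structure theorem for the subresultant chain (vanishing of subresultants in the ``gap'' degree range, together with the explicit relation between a defective subresultant $\Res_{n_i}$ and its similar non-defective companion $\Res_{n_{i-1}-1}$), and the inductive bookkeeping needed to match the normalization constants $c_i, s_i$ in Eq.~\eqref{eq:subressequence} against the Collins/Brown--Traub recurrence. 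The crux you flag --- telescoping the $c_i$-recurrence and tracking the exponent $\varepsilon_i + 1$ on $(-s_i)$ --- is indeed where all the work lies, and no step of your outline looks doomed. One practical remark: the paper's stated convention $\varepsilon_i = n_{i+1} - n_i$, taken together with the exponents appearing in Eq.~\eqref{eq:subressequence} and in the proposition itself, does not appear internally consistent with the later claim that $\varepsilon_i = 1$ in the normal case; you will want to pin down the intended sign and index conventions (almost certainly $\varepsilon_i = n_{i-1} - n_i$) before the telescoping in your inductive step can go through cleanly.
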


\begin{rem}
The Proposition \ref{prop:prem} provides a formula for \emph{all} 
subresultants. We note moreover that, in the common case where $n_{i-1} 
= n_i - 1$, the two formulas giving $\Res_{n_i}(A,B)$ agree.

Mimicing ideas behind extended Euclide's algorithm, one can define the 
``extended subresultant pseudo-remainder sequence'' as well and obtains
recursive formulae for cofactors at the same time.
\end{rem}

Important simplifications occur in the ``normal'' case, which is the 
case where all principal subresultants do not vanish. Under this 
additional assumption, one can prove that the degrees of the $S_i$'s 
decrease by one at each step; in other words, $\deg S_i = d_B - i$ for 
all $i$. The sequence $(S_i)$ then stops at $i = d_B$. Moreover, the 
$\varepsilon_i$'s and the $c_i$'s are now all ``trivial'': we have 
$\varepsilon_i = 1$ and $c_i = s_i$ for all $i$. The recurrence formula 
then becomes:
$$S_{i+1} = s_i^2 \cdot s_{i-1}^{-2}
\cdot (S_{i-1} \,\%\, S_i) \quad \text{for } i \geq 1.$$
and Proposition \ref{prop:prem} now simply states that $R_j = 
S_{d_B-j}$. In other words, still assuming that all principal 
subresultants do not vanish, the sequence of subresultants obeys to the 
recurrence:
\begin{align}
\label{eq:recRj}
R_{d+1} = A, \quad R_d = B, & \qquad
R_{j-1} = r_j^2 \cdot r_{j+1}^{-2}
\cdot (R_{j+1} \,\%\, R_j) 
\end{align}
where $r_j$ is the leading coefficient of $R_j$ for $j \leq d$ and $r_{d+1}
= 1$ by convention. Moreover, a similar 
recurrence exists for cofactors as well:
\begin{align}
U_{d+1} = 1, \quad U_d = 0, & \qquad
U_{j-1} = r_j^2 \cdot r_{j+1}^{-2}
\cdot (U_{j+1} - Q_j U_j) \label{eq:recUj} \\
V_{d+1} = 0, \quad U_d = 1, &\qquad
V_{j-1} = r_j^2 \cdot r_{j+1}^{-2}
\cdot (V_{j+1} - Q_j V_j) \label{eq:recVj}
\end{align}
where $Q_j$ is quotient in the Euclidean division of $R_{j+1}$ by $R_j$.

Proposition \ref{prop:prem} of course yields an algorithm for computing 
subresultants. In the normal case and assuming further for simplicity 
that the input polynomials are monic of same degree, it is 
Algorithm~\ref{algo:subres}, which uses the primitive \texttt{prem} for 
computing pseudo-remainders. We recall that the pseudo-remainder of the 
division of $A$ by $B$ is the polynomial $\texttt{prem}(A,B)$ defined by 
$\texttt{prem}(A,B) = \lc(B)^{\deg B-\deg A+1} (A \% B)$ where $\lc(B)$ 
denotes the leading coefficient of $B$.

\begin{algorithm}
  \SetKwInOut{Input}{Input}
  \SetKwInOut{Output}{Output}
  \Input{Two polynomials $A, B \in K_d[X]$ (given at finie precision)}
  \Output{The complete sequence of subresultants of $A$ and $B$.}

  \BlankLine

  $R_d \leftarrow B$; $r_d \leftarrow 1$

  $R_{d-1} \leftarrow B-A$

  \For{$j = (d-1), (d-2), \ldots, 1$}
    {$r_j \leftarrow$ coefficient in $X^j$ of $R_j$

     \lIf{$r_j = 0$}{\textbf{raise} NotImplementedError}

     $R_{j-1} \leftarrow \texttt{prem}(R_{j+1}, R_j) / r_{j+1}^2$
    }

  \Return{$R_{d-1}, \ldots, R_0$}
\caption{Subresultant pseudo remainder sequence algorithm}
\label{algo:subres}
\end{algorithm}

Unfortunately, while working over a complete discrete valuation field 
$K$, the stability of Algorithm~\ref{algo:subres} is as bad as that of 
standard Euclide algorithm. The use of Algorithm~\ref{algo:subres} is 
interesting because it avoids denominators (\emph{i.e.} we always work 
over $\A$ instead $K$) but it does not improve the stability.

\begin{ex}
\label{ex:subres}
Applying Algorithm~\ref{algo:subres} with the input $(A,B)$ of 
Example~\ref{ex:Euclide}, we obtain:
\begin{align*}
R_4 & = 
  \big(29 + O(2^5)\big) X^4 + 
  \big(14 + O(2^5)\big) X^3 + 
  \big(5 + O(2^5)\big) X^2 + 
  \big(17 + O(2^5)\big) X + 
  \big(17 + O(2^5)\big) \\
R_3 & = 
  \big(4 + O(2^5)\big) X^3 + 
  \big(13 + O(2^5)\big) X^2 + 
  \big(4 + O(2^5)\big) X + 
  \big(16 + O(2^5)\big) \\
R_2 & = 
  \big(5 + O(2^5)\big) X^2 + 
  \big(20 + O(2^5)\big) X + 
  O(2^5) \\
R_1 & = 
  \big(1 + O(2)\big) X + 
  \big(1 + O(2)\big) \\
R_0 & = 1 + O(2)
\end{align*}
We observe in particular that the absolute precision on $R_0$ is $1$, 
although it should be at least $5$ since $R_0$ is given by an integral
polynomial expression in terms of the coefficients of $A$ and $B$. We
note moreover that the relative precision on $R_0$ (which is $1$ as
well) is worse that the relative precision we got on $S_7$ (which was
$3$) while executing Algorithm~\ref{algo:Euclide} (\emph{cf} 
Example~\ref{ex:Euclide}).
\end{ex}

\section{Unstability of Euclide-like algorithms}
\label{sec:unstable}

In this section, we provide strong evidences for explaining the average 
loss of precision observed while executing Algorithm~\ref{algo:subres}.
Concretely, in \S \ref{subsec:lowerbound} we establish\footnote{in a 
model of precision which is slightly weaker that the usual one; we refer 
to \S \ref{subsec:lowerbound} for a complete discussion about this.} a 
lower bound on the losses of precision which depends on extra 
parameters, that are the valuations of the principal subresultants. 
The next subsections (\S\S \ref{subsec:proba} and \ref{subsec:proof})
aim at studying the behaviour of these valuations on random inputs;
they thus have a strong probabilistic flavour.

\begin{rem}
The locution \emph{Euclide-like algorithms} (which appears in the 
title of the Section) refers to the family of
algorithms computed \textsc{gcd}s or subresultants by means of successive
Euclidean divisions. We believe that the stability of all algorithms in
this family is comparable since we are precisely loosing precision
while performing Euclidean divisions.
Among all algorithms in this family, we chose to concentrale ourselves
on Algorithm~\ref{algo:subres} because it is simpler due to the fact
that it only manipulates polynomials with coefficients in $\A$.
Nevertheless, our method extends to many other Euclide-like algorithms
including Algorithm~\ref{algo:Euclide}; this extension is left as an exercice to
the reader.
\end{rem}

\subsection{A lower bound on losses of precision}
\label{subsec:lowerbound}

We consider two fixed polynomials $A$ and $B$ with coefficients in $\A$ 
whose coefficients are known with precision $O(\pi^N)$ for some positive
integer $N$. For simplicity, we assume further that $A$ and $B$ are both
monic and share the same degree $d$. 
For any integer $j$ between $0$ and $d-1$, we denote by $R_j$ the $j$-th 
subresultant of $A$ and $B$.

In this subsection, we estimate the loss of precision if we compute the 
$R_j$'s using the recurrence \eqref{eq:recRj}.
In what follows, we are going to use a \emph{flat precision model}: this 
means that a polynomial $P(X)$ is internally represented as:
$$P(X) = \sum_{i=1}^n a_i X^i + O(\pi^N)
\quad \text{with } a_i \in K \text{ and } N \in \Z.$$
In other words, we assume that the software we are using does not
carry a precision data on each coefficient but only a unique precision
data for the whole polynomial. Concretely this means that, after having
computing a polynomial, the software truncates the precision on each
coefficient to the smallest one.
One can argue that this assumption is too strong (compared to usual
implementations of $p$-adic numbers). Nevertheless, it defines a 
simplified framework where computations can be performed and experiments 
show that it rather well reflects the behaviour of the loss of precision 
in Euclide-like algorithms.

Let $V_j$ be the valuation of the principal $j$-th subresultant of $A,B$ 
and $W_j$ be the minimum of the valuations of the coefficients of $R_j$. 
We of course have $V_j \geq W_j$ and we set $\delta_j = V_j - W_j$.

\begin{prop}
\label{prop:precEuclide}
Let $A$ and $B$ as above. Either Algorithm~\ref{algo:subres} fails or
it outputs the subresultants $R_j$'s at precision $O(\pi^{N_j})$ with:
$$N_j \leq N + V_{j+1} - 2 \cdot (\delta_{j+1} + \delta_{j+2} + \cdots
+ \delta_{d-1}).$$
\end{prop}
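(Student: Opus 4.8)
The plan is to track the precision through the recurrence \eqref{eq:recRj} by induction on $j$ running downwards from $d-1$. I keep two bookkeeping quantities at each stage: the absolute precision $N_j$ carried by $R_j$ in the flat model, and the quantity $W_j$ (the minimum valuation of the coefficients of the true subresultant $R_j$). The key is that in the flat precision model, after computing $R_j$ the software reduces the precision on every coefficient to the smallest one, so the whole polynomial carries a single absolute precision $N_j$; correspondingly the relative precision on the "smallest" coefficient is $N_j - W_j$. I would first set up the base case: $R_d = B$ and $R_{d-1} = B - A$ are obtained with no loss, so $N_d = N_{d-1} = N$, $W_d = W_{d-1} = 0$.

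Next I would analyze one step of the recurrence $R_{j-1} = r_j^2 r_{j+1}^{-2}\cdot(R_{j+1}\,\%\,R_j)$, decomposed as: a Euclidean division of $R_{j+1}$ by $R_j$, then a multiplication by $r_j^2$, then a division by $r_{j+1}^2$. Using the precision formulas \eqref{eq:precadd}--\eqref{eq:precdiv}, the crucial observation is that a Euclidean division by $R_j$ forces a division by the leading coefficient $r_j$, and since $\val(r_j) = V_j$ (the leading coefficient of $R_j$ is the $j$-th principal subresultant), one loses at least $\deg R_{j+1} - \deg R_j + 1 = 2$ times $\val(r_j)$... but more carefully: in the flat model the relevant loss in the division step is governed by $\val(r_j) = V_j$, and the subsequent multiplication by $r_j^2$ partially compensates in *valuation* but not in *precision* — multiplying by something of valuation $2V_j$ raises all valuations by $2V_j$ but does not raise the absolute precision, so relative precision drops. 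The cleanest way to phrase the loss is in terms of the $\delta$'s: the step from $R_{j+1}$ to $R_{j-1}$ costs $2\delta_{j+1}$ in absolute precision. I would establish the inequality
\[
N_{j-1} \;\le\; N_{j+1} \;-\; 2\delta_{j+1}
\]
(with suitable handling of the $W$-bookkeeping to make this precise), together with the identity relating $W_{j-1}$ to the valuations involved, so the induction closes.

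Unrolling the recurrence then gives $N_j \le N_{j+1} - 2\delta_{j+1} - 2\delta_{j+2} - \cdots$; starting from $N_d = N$ and noting that the first division (by $R_{j+1}$, involving $r_{j+1}$ of valuation $V_{j+1}$) contributes the $+V_{j+1}$ correction rather than a $\delta$, I would arrive at
\[
N_j \;\le\; N + V_{j+1} - 2(\delta_{j+1} + \delta_{j+2} + \cdots + \delta_{d-1}),
\]
which is exactly the claim. The main obstacle I anticipate is getting the bookkeeping of the two division-and-multiplication steps exactly right in the flat precision model — in particular correctly separating the part of the loss that is a genuine loss of precision (the factor $2\delta_{j+1}$) from the part that merely reflects that $R_{j+1}$ had valuation-shifted coefficients to begin with (the $V_{j+1}$ boundary term), and checking that the "truncate to the minimum" operation of the flat model does not introduce any *additional* loss beyond what \eqref{eq:precmul}--\eqref{eq:precdiv} already account for. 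A secondary subtlety is that Proposition~\ref{prop:prem} / Algorithm~\ref{algo:subres} are only valid in the normal case, so one must either assume all principal subresultants are nonzero or phrase the statement as "either the algorithm fails or...", which is indeed how Proposition~\ref{prop:precEuclide} is stated.
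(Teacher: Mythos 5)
Your plan --- decompose an iteration of the loop into the pseudo-division of $R_{j+1}$ by $R_j$ followed by the scaling by $r_j^2 r_{j+1}^{-2}$, track the flat absolute precision through each elementary step via \eqref{eq:precadd}--\eqref{eq:precdiv}, and telescope --- is exactly the paper's, but the one-step inequality you write down is mis-indexed and would not telescope to the statement. Carrying out the bookkeeping carefully gives the single-step estimate
\[
N_{j-1} \;\le\; N_j - 2\delta_j + V_j - V_{j+1},
\]
with $\delta_j$ (not $\delta_{j+1}$) and $N_j$ (not $N_{j+1}$) on the right: the pseudo-division requires two divisions by $r_j$ (of valuation $V_j$) hitting the coefficient of $R_j$ of minimal valuation $W_j = V_j - \delta_j$, which brings the precision of the pseudo-remainder down to at most $N_j - 2\delta_j + V_{j+1} - V_j$, and the final multiplication by $r_j^2 r_{j+1}^{-2}$ then shifts this by $2(V_j - V_{j+1})$. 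Your proposed step $N_{j-1} \le N_{j+1} - 2\delta_{j+1}$ advances two indices at a time, so iterating it would accumulate only $\delta_{j+1}, \delta_{j+3}, \ldots$ rather than the full sum $\delta_{j+1} + \cdots + \delta_{d-1}$ --- and indeed in your unrolling you tacitly switch to $N_j \le N_{j+1} - 2\delta_{j+1} - 2\delta_{j+2} - \cdots$, which is not what your step gives and which, taken on its own, still has no $+V_{j+1}$.

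The $+V_{j+1}$ in the statement is also not a separate boundary correction coming from ``the first division'' as you guess: it emerges automatically from telescoping the differences $V_j - V_{j+1}$ in the correct one-step inequality, using the endpoints $V_d = 0$ (the leading coefficient of $B$ is $1$) and $N_{d-1} = N$ (the opening subtraction $R_{d-1} = B - A$ loses nothing). Summing $N_{j-1} \le N_j - 2\delta_j + V_j - V_{j+1}$ from index $d-1$ down to $j+1$ yields $N_j \le N + V_{j+1} - 2(\delta_{j+1} + \cdots + \delta_{d-1})$, so the Proposition does follow by the route you sketch once the per-step estimate is corrected.
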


\begin{proof}
Using that $R_{j+1}$ and $R_j$ have the expected degrees, the
remainder $(R_{j+1} \,\%\, R_j)$ is computed as follows:
$$\begin{array}{rrcl}
\text{we set:} & S & = & R_{j+1} - r_{j+1} \cdot r_j^{-1} \cdot R_j \smallskip \\
\text{and we have:} & R_{j+1} \,\%\, R_j & = & S - s \cdot r_j^{-1} \cdot R_j
\end{array}$$
where $s$ is the coefficient of degree $j$ of $S$. Let us first estimate 
the precision of $S$. Using \eqref{eq:precmul}--\eqref{eq:precdiv}, we 
find that the computed relation precision on $r_{j+1} \cdot r_j^{-1} 
\cdot R_j$ is $\min(N_{j+1} - V_{j+1}, N_j - V_j)$. The absolute
precision of this value is then
$M = \min(N_{j+1} - \delta_j, N_j - \delta_j + V_{j+1} - V_j)$. This
quantity is also the precision of $S$ since the other summand
$R_{j+1}$ is known with higher precision. Repeating the argument, we
find that the precision of $(R_{j+1} \,\%\, R_j)$ is equal to 
$\min(M - \delta_j, N_j - \delta_j + \val(s) - V_j)$ and therefore is 
lower bounded by $M - \delta_j \leq N_j - 2 \delta_j + V_{j+1} - V_j$.
From this, we derive $N_{j-1} \leq N_j - 2 \delta_j - V_{j+1} + V_j$ and
the proposition finally follows by summing up these inequalities.
\end{proof}

The difference 
$N - N_0 = - V_1 + 2 \sum_{k=1}^d \delta_j$
is a lower bound on the number of digits lost after having computed the 
resultant using the subresultant pseudo-remainder sequence algorithm. In 
the next subsection (\emph{cf} Corollary \ref{cor:Vj}), we shall see that 
$V_1$ and all $\delta_j$'s are 
approximatively equal to $\frac 1 {p-1}$ on average. The loss of 
precision then grows linearly with respect to $d$. This confirms the 
precision benchmarks shown in Figure \ref{fig:precision}.
We emphasize one more time that this loss of precision is \emph{not}
intrinsic but an artefact of the algorithm we have used; indeed, one
should not loose any precision when computing resultants because they
are given by polynomial expressions.

\subsection{Behaviour on random inputs}
\label{subsec:proba}

Proposition \ref{prop:precEuclide} gives an estimation of the loss of 
precision in Euclide-like algorithms in terms of the quantities $V_j$ 
and $\delta_j$. It is nevertheless \emph{a priori} not clear how large 
these numbers are. The aim of this paragraph is to compute their order 
of magnitude when $A$ and $B$ are picked randomly among the set of monic 
polynomials of degree $d$ with coefficients in $\A$. In what follows, we 
assume that the residue field $k = \A/\pi\A$ is finite and we use the 
letter $q$ to denote its cardinality.

We endow $\A$ with its Haar measure. The set $\Omega$ of couples of 
monic polynomial of degree $d$ with coefficients in $\A$ is canonically 
in bijection with $\A^{2d}$ and hence inherits the product measure. 
We consider $V_j$, $W_j$ and $\delta_j$ as random variables defined on 
$\Omega$.

\begin{theo}
\label{th:lawVj}
We fix $j \in \{0, \ldots, d-1\}$.
Let $X_0, \ldots, X_{d-1}$ be $d$ pairwise independant discrete random 
variables with geometric law of parameter $(1 - q^{-1})$, \emph{i.e.}
$$\P[X_i = k] = (1 - q^{-1}) \cdot q^{-k} \quad 
\text{(with } 0 \leq i < d \text{ and } k \in \N \text{)}.$$
Then $V_j$ is distributed as the random variable
\begin{center}
$\displaystyle Y_j = \sum_{i=0}^d \min(X_{j-i}, X_{j-i+1}, \ldots, X_{j+i})$
\end{center}
\noindent
with $X_i = +\infty$ if $i < 0$ and $X_i = 0$ if $i \geq d$.
\end{theo}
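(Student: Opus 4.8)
The plan is to express the $j$-th principal subresultant of two random monic polynomials $A, B$ of degree $d$ as a determinant in the coefficients, and to reduce the computation of its valuation to a combinatorial analysis of a closely related random matrix over $\A$. First I would set $C = A - B$, which is a random element of $\A_{<d}[X]$ with i.i.d.\ Haar-distributed coefficients; the subresultant $\Res_j(A,B)$ only depends on $A$ and $B$ through $B$ and $C$, and since $B$ is monic of degree $d$, repeated reduction of the Sylvester-type matrix of $\psi_j$ modulo $B$ should let me rewrite the principal subresultant, up to a unit, as the determinant of an $(2i+1)\times(2i+1)$-type structured matrix — essentially a Toeplitz/Sylvester band built from the coefficients of $C$ reduced in degree $<d$. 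The appearance of the window $\min(X_{j-i},\dots,X_{j+i})$ in $Y_j$ strongly suggests that after triangulating this matrix by column/row operations that respect valuations, the $(i+1)$-th pivot has valuation exactly $\min$ over a symmetric window of $2i+1$ of the ``digit valuations'' of $C$, and $V_j$ is the sum of these pivot valuations, which is exactly $Y_j = \sum_{i=0}^d \min(X_{j-i},\dots,X_{j+i})$.

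Concretely, the key steps in order are: (1) reduce to studying the matrix $M$ whose determinant is $\Res_j(B - C, B)$ and identify it, via functoriality and row reduction using monicity of $B$, with a band matrix in the coefficients $c_0, \dots, c_{d-1}$ of $C$; (2) show that the Haar-random coefficients of $C$ have digit valuations that, in the relevant truncation, behave like independent geometric variables $X_0, \dots, X_{d-1}$ of parameter $1 - q^{-1}$ — this is just the standard fact that the valuation of a Haar-random element of $\A$ is geometric; (3) perform a valuation-preserving LU-type factorization of $M$ (working digit by digit over the residue field, i.e.\ a Smith-normal-form / Hermite-style argument) and check that the product of pivot valuations telescopes into $\sum_{i} \min(\text{window}_i)$; (4) handle the boundary conventions ($X_i = +\infty$ for $i<0$, which kills the contribution of windows that run off the top, and $X_i = 0$ for $i \geq d$, which reflects that beyond degree $d$ the monic leading terms contribute valuation $0$) by carefully tracking which rows/columns of $M$ come from the monic part of $A$ and $B$ versus from $C$.

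The main obstacle I expect is step (3): proving that the valuations of the successive pivots are \emph{exactly} the symmetric-window minima, rather than merely bounded by them. Generic linear algebra over $\A$ only gives that the valuation of a minor is at least the minimum valuation of the entries appearing in its expansion; equality requires showing that the ``leading'' term in the determinant expansion does not get cancelled modulo higher powers of $\pi$. The natural way to do this is to pass to the residue field at each stage: after scaling row/column blocks by appropriate powers of $\pi$ determined by the $X_i$'s, one must exhibit that the reduced matrix over $k$ is invertible (equivalently, that a certain structured Toeplitz/Hankel matrix over $k$ built from the leading digits of the $c_i$'s has nonzero determinant), and this holds on a dense open set, hence almost surely with respect to Haar measure. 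A clean way to organize this is to induct on $i$: the rank-one update structure of passing from the $(2i-1)$-window determinant to the $(2i+1)$-window determinant should make the pivot valuation increment visible, and the independence of fresh digits entering the larger window guarantees that the generically-nonzero condition is met with full probability. Once the almost-sure identity $V_j = Y_j$ is established pointwise (off a measure-zero set), the equality in distribution is immediate, and I would finish by remarking that the boundary truncations ensure $Y_j$ is almost surely finite (the windows eventually include an $X_i = 0$).
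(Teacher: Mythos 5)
Your plan hinges on establishing an almost-sure \emph{pointwise} identity $V_j = Y_j$, with $X_i$ identified with the valuation of the coefficient of $X^i$ in $C = A - B$, so that $V_j$ would be read off as a sum of pivot valuations in a Smith-normal-form factorization of the Sylvester-type matrix. Such a pointwise identity is in fact false, and the failure is not confined to a measure-zero set. Take $d = 2$, $j = 0$: with $A = X^2 + a_1 X + a_0$, $B = X^2 + b_1 X + b_0$ and $c_i = b_i - a_i$, one computes $\Res(A,B) = c_0^2 - a_1 c_0 c_1 + a_0 c_1^2$. On the positive-measure event $\val(c_1) < \val(c_0)$ and $\val(a_0) = 0$, the dominant term is $a_0 c_1^2$ and $V_0 = 2\val(c_1)$, whereas your formula would give $Y_0 = X_0 + \min(X_{-1},X_0,X_1) = \val(c_0) + \val(c_1) > 2\val(c_1)$. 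So the elementary divisors of the relevant band matrix are not the symmetric-window minima of the $\val(c_i)$'s; the determinant valuation depends on higher digits of \emph{all} the coefficients and cannot be reconstructed from the single tuple $(\val(c_0),\dots,\val(c_{d-1}))$. Note also that the summands of $Y_j$ overlap heavily ($X_j$ appears in every window); that structure is characteristic of an iterated coarse-graining, not of a single triangular factorization whose pivots multiply to the determinant.

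The theorem really is only an equality of distributions, and the paper's proof works entirely at that level by a renormalization (``zoom in one digit at a time'') argument built on three ingredients absent from your sketch: the algebraic self-similarity $\Res^{j,j-1}_k(R_j,R_{j-1}) = r_j^{2(j-k-1)} R_k$ from Proposition~\ref{prop:relations}; the Haar-measure-preserving bijection $(A,B)\mapsto(U_j,U_{j-1},R_j,R_{j-1})$ of Proposition~\ref{prop:bijection} / Corollary~\ref{cor:bijection}; and the mutual independence of the residue-field indicators $\bar V_{j'} = \mathbbm{1}_{\{r_{j'}\equiv 0\}}$ from Proposition~\ref{prop:distribk}. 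One first defines $V_j^{(0)}$ as the radius of the window around $j$ on which $r_{j'} \equiv 0 \pmod\pi$, uses the self-similarity plus measure preservation to show that after rescaling one again faces a Haar-random pair of monic polynomials of smaller degree, and iterates; the resulting double array $(X_i^{(m)})$ of i.i.d.\ Bernoulli variables, aggregated over $m$, produces the geometric $X_i$'s and the overlapping-window sum $Y_j$. Your concluding step, ``once the almost-sure identity $V_j = Y_j$ is established pointwise ... equality in distribution is immediate,'' targets a strictly stronger statement that the counterexample above rules out, so the argument as proposed cannot be completed.
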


\begin{rem}
The above Theorem does not say anything about the correlations between
the $X_j$'s. In particular, we emphasize that it is \emph{false} that
the tuple $(V_{d-1}, \ldots, V_0)$ is distributed as $(Y_{d-1}, \ldots,
Y_0)$. For instance, one can prove that $(V_{d-1}, 
V_{d-2})$ is distributed as 
$(X, \, X' + \min(X', [X/2]))$
where $X$ and $X'$ are two independant discrete random variables with 
geometric law of parameter $(1 - q^{-1})$ and the notation $[\cdot]$ 
stands for the integer part function. In particular, we observe that 
$(V_{d-2}, V_{d-1}) \neq (2,1)$ almost surely although the events 
$\{V_{d-1} = 2\}$ and $\{V_{d-2} = 1\}$ both occur with positive 
probability.

Nonetheless, a consequence of Proposition \ref{prop:distribk} below is 
that the variables $\bar V_j = \mathbbm{1}_{\{V_j = 0\}}$ are mutually 
independant.
\end{rem}

\begin{theo}
\label{th:deltaj}
For all $j \in \{0, \ldots, d-1\}$ and all $m \in \N$, we have:
$$\P[\delta_j \geq m] \geq \frac{(q-1)(q^j-1)}{q^{j+1}-1} q^{-m}.$$
\end{theo}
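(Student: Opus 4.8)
The plan is to reduce the statement to a purely residue-field computation via the functoriality of subresultants, and then exhibit, by a counting argument over the residue field, a large enough family of pairs $(A,B)$ for which $\delta_j \geq m$. The key observation is that $\delta_j = V_j - W_j \geq m$ as soon as the reduction of $A,B$ modulo $\pi^m$ is such that the $j$-th subresultant $R_j$ has \emph{all} its coefficients divisible by $\pi$ (i.e. $W_j \geq 1$) while the $j$-th principal subresultant is divisible by $\pi^{m+?}$; more precisely, the cleanest sufficient condition is that, over the residue field $k$, the polynomials $\bar A, \bar B$ have a common factor of degree $> j$, so that $\bar R_j = 0$ in $k[X]$, forcing $W_j \geq 1$, and then to iterate this requirement $m$ times. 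So the first step is to make this implication precise: I would like to show $\P[\delta_j \geq m] \geq \P[\text{some explicit event } E_m]$ where $E_m$ is cut out by congruence conditions modulo $\pi^m$, hence has probability a power of $q^{-1}$ times a count over $k$.

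\textbf{Main steps.} First, I would fix the reduction-map $\A \to k$ and recall that $\Res_j^{d,d}$ commutes with it (stated in the excerpt). Second, I would identify the ``bad locus'' in $k[X]_d \times k[X]_d$ where the $j$-th principal subresultant vanishes: by the classical theorem quoted just before Theorem~\ref{th:lawVj}, $\Res_j(\bar A, \bar B) \neq 0$ with $j$ minimal exactly means $\gcd(\bar A,\bar B)$ has degree $j$; so $\Res_j(\bar A,\bar B) = 0$ precisely when $\deg \gcd(\bar A, \bar B) > j$, and more relevantly the subresultant $R_j$ vanishes identically modulo $\pi$ on that locus. Third — this is the combinatorial heart — I would count pairs of monic degree-$d$ polynomials over $k$ admitting a common monic factor of degree exactly $j+1$ (or at least $j+1$): choosing such a common factor $G$ (there are $q^{j+1}$ monic polynomials of degree $j+1$, but one wants an irreducible or at least square-free-enough normalization to avoid overcounting) and then choosing the cofactors freely of degree $d-j-1$, one gets roughly $q^{j+1} \cdot q^{2(d-j-1)}$ pairs, and after inclusion–exclusion the probability that $V_j \geq 1$ is $\frac{(q-1)(q^j-1)}{q^{j+1}-1} \cdot q^{-1}$ — matching the target with $m=1$. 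Fourth, I would promote this from $m=1$ to general $m$: conditionally on $\bar A, \bar B$ lying in the bad locus, write $A = A_0 + \pi A_1$, $B = B_0 + \pi B_1$ and observe that $\Res_j$ of the lift, divided by $\pi$ (it is divisible since $W_j \geq 1$), reduces modulo $\pi$ to something that is again, for a proportion $\geq \frac{(q-1)(q^j-1)}{q^{j+1}-1}$ of the choices of $(A_1 \bmod \pi, B_1 \bmod \pi)$, in the corresponding bad locus — so a self-similar recursion yields the factor $q^{-m}$.

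\textbf{Expected obstacle.} The delicate point will be step four, i.e. upgrading the $m=1$ computation to the geometric tail $q^{-m}$ with the \emph{same} constant $\frac{(q-1)(q^j-1)}{q^{j+1}-1}$ rather than a constant that degrades with $m$. The naive recursion bounds $\P[\delta_j \geq m \mid \delta_j \geq m-1]$ by re-running the degree count on a ``linearized'' version of the condition, but one must check that the linear-algebra condition defining ``$\delta_j \geq m$'' genuinely stratifies into $m$ independent layers each governed by the same residue-field count — in other words that the valuation $W_j$ of $R_j$ and the valuation $V_j$ of its leading coefficient increase \emph{in lockstep} with high enough probability. A clean way around this, which I would pursue, is to find one robust event — e.g. that $\bar A$ and $\bar B$ share a fixed monic factor $G$ of degree $j+1$ \emph{and} that the corresponding digit-by-digit Hensel-type obstruction to lifting the common factor to $\A/\pi^m$ vanishes — whose probability is literally $\frac{(q-1)(q^j-1)}{q^{j+1}-1} q^{-m}$ and which forces the $(j+1)$-st (hence $j$-th) subresultant to have valuation $\geq m$ in \emph{both} senses, giving $\delta_j \geq m$ directly without a recursion. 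Getting the constant exactly right will require being careful about whether $G$ is taken irreducible, monic of degree $j+1$, or ranging over a coset, and about the inclusion–exclusion that turns ``$\deg\gcd \geq j+1$'' counting into the stated rational function of $q$; I expect that bookkeeping, not any conceptual difficulty, to be where the work lies.
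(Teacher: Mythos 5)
The proposal targets the wrong event, and that is a conceptual gap rather than a bookkeeping issue. Recall $\delta_j = V_j - W_j$, where $V_j = \val(r_j)$ and $W_j$ is the \emph{minimum} valuation over all coefficients of $R_j$. Your ``bad locus'' --- $\bar A,\bar B$ sharing a common factor of degree $>j$, hence $\bar R_j = 0$ in $k[X]$ --- forces $W_j \geq 1$, and its $\pi$-adic iteration forces $W_j \geq m$; but this raises $V_j$ and $W_j$ \emph{in tandem} and puts no lower bound whatsoever on the difference $\delta_j$. (Indeed, having $R_j \equiv 0 \bmod \pi^m$ is perfectly compatible with $\delta_j = 0$.) The event that makes $\delta_j$ large is essentially the opposite one: $W_j$ small (typically $0$, so some coefficient of $R_j$ is a unit) while the \emph{leading} coefficient $r_j$ has valuation $\geq m$. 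Relatedly, the claim that ``the probability that $V_j \geq 1$ is $\frac{(q-1)(q^j-1)}{q^{j+1}-1}q^{-1}$'' is not correct: by Proposition~\ref{prop:distribk}, $\P[V_j \geq 1] = q^{-1}$, and $\{V_j \geq 1\}$ is simply not the same event as $\{\delta_j \geq 1\}$. So even the $m=1$ case as set up here does not close.

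The paper's proof goes through a structural input your plan does not use and, as far as I can see, cannot do without: the measure-preserving bijection of Proposition~\ref{prop:bijection} / Corollary~\ref{cor:bijection}, which identifies the locus where $r_{j+1}$ is a unit with an explicit product space on which $R_j$ is a \emph{free} coordinate. Concretely: one first conditions on $r_{j+1}\in\A^\times$ (probability $1-q^{-1}$, by Proposition~\ref{prop:distribk}); Corollary~\ref{cor:bijection} then shows that, conditionally, $R_j$ is Haar-distributed in $\A_{\leq j}[X]$, i.e.\ its $j{+}1$ coefficients are i.i.d.\ uniform in $\A$. At that point $\delta_j$ is just the statistic ``by how much does the valuation of the last coordinate exceed the minimum of all $j{+}1$ coordinate valuations'' for i.i.d.\ geometric variables, and a short computation gives $\P[\delta_j \geq m \mid r_{j+1}\in\A^\times] = \frac{q(q^j-1)}{q^{j+1}-1}q^{-m}$, whence the theorem. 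This Haar-equidistribution of $R_j$ --- not a residue-field common-factor count, and not a Hensel-style iteration --- is the missing idea; without it you have no access to the joint distribution of the coefficients of $R_j$, which is exactly what $\delta_j$ depends on.
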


The proof of these two theorems will be given in \S \ref{subsec:proof}. 
We now derive some consequences. Let $\sigma$ denote the following
permutation:
$$\begin{array}{cl}
\left(
\begin{array}{cccccccc}
1 & 2 & \cdots & \frac d 2 & \frac d 2 + 1 & \frac d 2 + 2 & \cdots & d \smallskip \\
1 & 3 & \cdots & d-1 & d & d-2 & \cdots & 2 
\end{array} \right) & \text{if } 2 \mid d \medskip \\
\left(
\begin{array}{cccccccc}
1 & 2 & \cdots & \frac{d+1} 2 & \frac{d+3} 2 & \frac{d+5} 2 & \cdots & d \smallskip \\
1 & 3 & \cdots & d & d-1 & d-3 & \cdots & 2 
\end{array} \right) & \text{if } 2 \nmid d.
\end{array}$$
In other words, $\sigma$ takes first the odd values in $[1,d]$ in 
increasing order and then the even values in the same range in 
decreasing order.

\begin{cor}
\label{cor:Vj}
For all $j \in \{0, \ldots, d-1\}$, we have:

\begin{enumerate}[\hspace{0.3cm}(1)]
\setlength\itemsep{0.1em}
\item $\E[V_j] = 
\displaystyle \sum_{i=1}^{d-j} \frac 1 {q^{\sigma(i)} - 1}$;
in particular $\frac 1 {q-1} \leq \E[V_j] < \frac q {(q-1)^2}$
\item $\frac {q^j - 1}{q^{j+1} - 1} \leq \E[\delta_j] \leq \E[V_j]$
\item $\sigma[V_j]^2 = 
\displaystyle \sum_{i=1}^{d-j} \frac {(2i-1) \cdot q^{\sigma(i)}}
{(q^{\sigma(i)} - 1)^2}$;
in particular $\frac{\sqrt q}{q-1} \leq \sigma[V_j] < 
\frac {q \:\sqrt{q+1}} {(q-1)^2}$
\item $\P[V_j \geq m] \leq q^{-m + O(\sqrt m)}$
\item $\E[\max(V_0, \ldots, V_{d-1})] \leq \log_q d + O(\sqrt{\log_q d})$
\end{enumerate}
\end{cor}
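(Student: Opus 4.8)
The plan is to establish each of the five items of Corollary~\ref{cor:Vj} as a consequence of Theorem~\ref{th:lawVj} and Theorem~\ref{th:deltaj}. For (1), (3), (4), (5) I would work entirely with the reference random variable $Y_j = \sum_{i=0}^d \min(X_{j-i}, \ldots, X_{j+i})$, since $V_j$ and $Y_j$ have the same law. The starting observation is that for a finite window $X_a, \ldots, X_b$ of $n = b-a+1$ independent geometric variables of parameter $1 - q^{-1}$, the minimum $\min(X_a, \ldots, X_b)$ is again geometric, with $\P[\min \geq k] = q^{-nk}$, hence parameter $1 - q^{-n}$, mean $\frac{1}{q^n - 1}$, and second moment $\frac{q^n + 1}{(q^n-1)^2}$ (equivalently variance $\frac{q^n}{(q^n-1)^2}$). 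With the boundary conventions $X_i = +\infty$ for $i<0$ and $X_i = 0$ for $i \geq d$, the $i$-th summand $\min(X_{j-i}, \ldots, X_{j+i})$ is a minimum over the \emph{effective} window $\{\max(0,j-i), \ldots, \min(d-1,j+i)\}$, except that once $j+i \geq d$ the term collapses to $0$. Counting which window sizes actually appear as $i$ ranges over $0, \ldots, d-j-1$ (beyond that the summand is $0$) produces exactly the multiset $\{\sigma(1), \ldots, \sigma(d-j)\}$: as $i$ grows the window first extends to the right until it hits the left boundary or the right boundary, and the permutation $\sigma$ is precisely the bookkeeping device that lists these sizes --- odd sizes $1, 3, 5, \ldots$ while the window is still interior, then the even sizes in decreasing order once a boundary has been reached. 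This is the combinatorial heart of the argument and the step I expect to cost the most care: one has to check the parity pattern in both cases $2 \mid d$ and $2 \nmid d$ and verify the count $d-j$ of nonzero summands.

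Granting that identification, (1) and (3) are immediate by linearity of expectation and, for the second moment, by the independence of the $X_i$'s restricted to \emph{disjoint} windows — but the windows here are nested, not disjoint, so I would instead write $Y_j = \sum_i M_i$ with $M_i = \min(X_{j-i}, \ldots, X_{j+i})$ and note $M_i \leq M_{i-1}$, so $Y_j$ is a sum of a decreasing sequence; then $\E[Y_j] = \sum_i \E[M_i] = \sum_{i=1}^{d-j} \frac{1}{q^{\sigma(i)}-1}$ and, for the variance, expand $\E[Y_j^2] = \sum_{i,i'} \E[M_i M_{i'}]$ and use that for $i' \leq i$ the window of $M_i$ contains that of $M_{i'}$, so $M_i$ and the ``excess'' part are independent; a short computation collapses the double sum to $\sum_{i=1}^{d-j} (2i-1)\frac{q^{\sigma(i)}}{(q^{\sigma(i)}-1)^2}$ after centering — this matches the stated $\sigma[V_j]^2$. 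The explicit bounds $\frac{1}{q-1} \leq \E[V_j] < \frac{q}{(q-1)^2}$ follow by noting the largest contribution comes from the window of size $1$ (giving $\frac{1}{q-1}$) and bounding the tail $\sum_{n \geq 1} \frac{1}{q^n-1} \leq \sum_{n\geq 1} q^{-n}\cdot\frac{q}{q-1}\cdot$(something) — more cleanly, $\sum_{n\geq 1}\frac{1}{q^n-1} < \frac{q}{(q-1)^2}$ by comparison with a geometric series; the variance bounds are analogous, with $\sum_{n\geq 1}(2n-1)\frac{q^n}{(q^n-1)^2}$ dominated by a convergent series evaluating to less than $\frac{q^2(q+1)}{(q-1)^4}$.

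For (2), the lower bound is exactly Theorem~\ref{th:deltaj} with $m=1$ combined with $\E[\delta_j] = \sum_{m \geq 1} \P[\delta_j \geq m] \geq \P[\delta_j \geq 1] \geq \frac{(q-1)(q^j-1)}{q^{j+1}-1}\cdot q^{-1} = \frac{q^j-1}{q^{j+1}-1}$; wait, I should instead sum the full geometric-type bound: $\E[\delta_j] \geq \frac{(q-1)(q^j-1)}{q^{j+1}-1}\sum_{m\geq 1} q^{-m} = \frac{(q-1)(q^j-1)}{q^{j+1}-1}\cdot\frac{1}{q-1} = \frac{q^j-1}{q^{j+1}-1}$, which is the claimed bound. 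The upper bound $\E[\delta_j] \leq \E[V_j]$ is just $\delta_j = V_j - W_j \leq V_j$ since $W_j \geq 0$ (the subresultant has coefficients in $\A$, hence nonnegative valuations), together with linearity. For (4) I would bound the upper tail of $Y_j$: since $Y_j = \sum_{n=1}^{d-j} M^{(n)}$ where $M^{(n)}$ is geometric of parameter $1-q^{-n}$ (listing the summands by window size), for the event $\{Y_j \geq m\}$ at least one summand must be large, or rather I would use a Chernoff-type bound — compute $\E[t^{Y_j}]$ for $t$ slightly less than $q$, which is a product $\prod_n \frac{1-q^{-n}}{1-tq^{-n}}$, finite and controlled, then optimize $t$; the $n=1$ factor forces $t < q$ and the resulting bound is $q^{-m}$ times a subexponential correction, giving $\P[V_j \geq m] \leq q^{-m + O(\sqrt m)}$ after choosing $t = q(1 - c/\sqrt m)$. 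Finally (5) follows from (4) by a union bound: $\P[\max_j V_j \geq m] \leq d \cdot q^{-m + O(\sqrt m)}$, and $\E[\max_j V_j] = \sum_{m\geq 1}\P[\max_j V_j \geq m] \leq m_0 + d\sum_{m \geq m_0} q^{-m+O(\sqrt m)}$; choosing $m_0 = \log_q d + C\sqrt{\log_q d}$ makes the tail sum $O(1)$, yielding $\E[\max(V_0,\ldots,V_{d-1})] \leq \log_q d + O(\sqrt{\log_q d})$. The main obstacle throughout is the bookkeeping linking the nested-window minima to the permutation $\sigma$ in items (1) and (3); the tail estimates in (4)–(5) are routine once the moment generating function is written down, and (2) is essentially a restatement of Theorem~\ref{th:deltaj}.
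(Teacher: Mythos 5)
Your treatment of (1), (2) and (5) matches the paper's essentially verbatim: (1) by summing the means of the nested geometric minima (the paper packages the window sizes into the sorted sequence $\tau(1),\ldots,\tau(d-j)$, which is the same bookkeeping you describe with $\sigma$); (2) by summing the tail bound of Theorem~\ref{th:deltaj} over $m$ (your first attempt $\E[\delta_j]\geq\P[\delta_j\geq 1]$ was indeed off by a factor $q^{-1}$, and your self-correction is exactly the paper's computation); and (5) by the union bound from (4). For (3) you describe the same second-moment expansion that the paper carries out via the identity $\Cov(X,\min(X,X'))=\frac{ab}{(ab-1)^2}$; your sketch is compatible with it, though you do not actually carry out the collapse of the double sum.

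The genuine gap is in (4). You write $\E[t^{Y_j}]=\prod_n \frac{1-q^{-n}}{1-tq^{-n}}$, but this product formula is \emph{false}: the summands $M^{(n)}=\min(X_{j-i},\ldots,X_{j+i})$ are computed over nested windows and hence are \emph{not} independent. They are all increasing functions of the same underlying $X_i$'s, so by positive association one has $\E[t^{Y_j}]\geq\prod_n\E[t^{M^{(n)}}]$ for $t>1$ --- the inequality goes the wrong way, so the product does not bound the moment generating function from above and the Chernoff step cannot proceed. (Concretely, already for $Y=X_1+\min(X_1,X_2)$ the true generating function has a double pole at $t=q$ while your product has only a simple pole.) Any honest upper bound on $\E[t^{Y_j}]$ near $t=q$ would, as far as I can see, require precisely the tail estimate one is trying to prove, so the argument is circular. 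The paper avoids this entirely by a combinatorial device: it dominates $V_j$ by $Z=\sum_{i\geq 1}\min(X_1,\ldots,X_i)$, observes that the event $\{Z\geq m\}$ forces the existence of a partition $(m_1,\ldots,m_\ell)$ of $m$ with $X_i\geq m_i$ for all $i$, bounds the probability of that union by $p(m)\,q^{-m}$ where $p(m)$ is the number of partitions of $m$, and then invokes the Hardy--Ramanujan asymptotic $\log p(m)\sim\pi\sqrt{2m/3}$ to conclude $q^{-m+O(\sqrt m)}$. Your passing remark ``at least one summand must be large'' is in fact the seed of this argument; you would do better to pursue it and drop the moment-generating-function route.
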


\begin{proof}
By Theorem \ref{th:lawVj}, we have $\E[V_j] =
\sum_{i=0}^d \E[Z_i]$ with $Z_i = \min(X_{j-i}, \ldots, X_{j+i})$ ($j$ 
is fixed during all the proof). Our conventions imply that $Z_i$ 
vanishes if $i \geq d-j$. On the contrary, if $i < d-j$, let us define 
$\tau(1), \ldots, \tau(d-j)$ as the numbers $\sigma(1), \ldots, 
\sigma(d-j)$ sorted in increasing order. The random variable $Z_i$ 
is then the minimum of $\tau(i)$ independant random variables with 
geometric distribution of parameter $(1 - q^{-1})$ and thus its 
distribution is geometric of parameter $(1 - q^{-\tau(i)})$.
Its expected value is then $\frac 1 {q^{\tau(i) - 1}}$ and the first
formula follows. The inequality $\frac 1 {q-1} \leq \E[V_j]$ is clear
because $\frac 1 {q-1}$ is the first summand in the expansion of
$\E[V_j]$. The upper bound is derived as follows:
$$\E[V_j] < \sum_{i=0}^\infty \frac 1{q^i - 1}
\leq \sum_{i=0}^\infty \frac 1{q^i - q^{i-1}} = \frac q {(q-1)^2}.$$

The first inequality of claim~(2) is obtained from the relation 
$$\E[\delta_j] 
= \sum_{m=1}^\infty m \cdot \P[\delta_j = m]
= \sum_{m=1}^\infty \P[\delta_j \geq m]$$ 
using the estimation of Theorem \ref{th:deltaj}. The second inequality 
is clear because $\delta_j \leq V_j$.

The variance of $V_j$ is related to the covariance of $Z_i$'s thanks to 
the formula
$$\Var(V_j) = \sum_{1 \leq i,i' \leq d-j} \Cov(Z_i, Z_{i'}).$$
Moreover, given $X$ and $X'$ two independant variables having geometric
distribution of parameter $(1 - a^{-1})$ and $(1 - b^{-1})$ respectively,
a direct computation gives:
$$\Cov(X, \min(X,X')) = \frac{ab}{(ab-1)^2}.$$
Applying this to our setting, we get:
$$\Cov(Z_i, Z_{i'}) = \frac{q^{e(i,i')}}{(q^{e(i,i')}-1)^2}$$
where $e(i,i') = \min(\tau(i), \tau(i')) = \tau(\min(i,i'))$. Summing
up these contributions, we get the equality in~(3). The inequalities
are derived from this similarly to what we have done in~(1).

We now prove~(4). Let $(Z_i)_{i \geq 0}$ be a countable family 
of independant random variable having all geometric distribution of 
parameter $(1 - q^{-1})$. We set $Z = \sum_{i=1}^\infty \min(Z_1, \ldots, 
Z_i)$. Cleary $V_j \leq Z$ and it is then enough to prove:
$$\P[Z \geq m] \leq q^{-m + O(\sqrt m)}.$$
We introduce the event $E_m$ formulated as follows:
\emph{there exists a partition $(m_1, \ldots, m_\ell)$ of $m$ such that $X_i 
\geq m_i$ for all $i \leq \ell$}.
Up to a measure-zero subset, $E_m$ contains the event $\{ Z \geq m\}$.
We obtain this way:
$$\P[Z \geq m] \leq \P[E_m] \leq
\sum \, \prod_{i=1}^\ell \P[X_1 \geq m_i]$$
where the latter sum runs over all partitions $(m_1, \ldots, m_\ell)$ 
of $m$. Replacing $\P[X_1 \geq m_i]$ by $q^{-m_i}$, we get
$\P[E_m] \leq p(m) \cdot q^{-m}$
where $p(m)$ denotes the number of partitions of $m$. By a famous 
formula \cite{andrews}, we know that $\log p(m)$ is equivalent to $\pi 
\sqrt{2m/3}$. In particular it is in $q^{O(\sqrt m)}$ and~(4) is proved.

We now derive~(5) by a standard argument. It follows from~(4) that
$$\P[\max(V_0, \ldots, V_{d-1})] \leq d \cdot q^{-m + c\sqrt m}$$
for some constant $c$. Therefore:
$$\E[\max(V_0, \ldots, V_{d-1})] \leq \sum_{m=1}^\infty \min(1,
d \cdot q^{-m + c\sqrt m}).$$
Let $m_0$ denote the smallest index such that $d \cdot q^{-m_0 + c\sqrt 
m_0}$, \emph{i.e.} $m_0 - c \sqrt{m_0} \geq \log_q d$. Solving the latest
equation, we get $m_0 = \log_q + O(\sqrt{\log_q d})$. Moreover
$\sum_{m=m_0}^\infty d\: q^{-m + c\sqrt m}$ is bounded independantly of
$d$. The result follows.
\end{proof}

\subsection{Proof of Theorems \ref{th:lawVj} and \ref{th:deltaj}}
\label{subsec:proof}

During the proof, $A$ and $B$ will always refer to monic polynomials of 
degree $d$ and $R_j$ (resp. $U_j$ and $V_j$) to their $j$-th 
subresultant (resp. their $j$-th cofactors). If $P$ is a polynomial and 
$n$ is a positive integer, we use the notation $P[n]$ to refer to the 
coefficient of $X^n$ in $P$. We set $r_j = R_j[j]$.

\medskip

\noindent
\textbf{Preliminaries on subresultants.}
We collect here various useful relations between subresultants and 
cofactors. During all these preliminaries, we work over an arbitrary
base ring $\ring$.

\begin{prop}
\label{prop:relations}
The following relations hold:
\begin{itemize}
\setlength\itemsep{0.1em}
\item $U_{j-1} V_j - U_j V_{j-1} = (-1)^j r_j^2$;
\item $U_j[d{-}j{-}1] = -V_j[d{-}j{-}1] = (-1)^j r_{j+1}$;
\item $\Res^{j,j-1}_k(R_j, R_{j-1}) = r_j^{2(j-k-1)} \: R_k$ for $k < j$;
\item $\Res^{d-j,d-j-1}_k(U_{j-1}, U_j) = r_j^{2(d-j-k-1)} \: U_{d-1-k}$
for $k < d-j$.
\end{itemize}
Moreover $r_j$ depends only on the $2(d-j)-1$ coefficients of highest 
degree of $A$ and $B$.
\end{prop}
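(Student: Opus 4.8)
The plan is to derive all four identities by exploiting the functoriality of subresultants together with the fact that, over the generic base ring (say $\ring = \Z[a_0,\dots,a_{d-1},b_0,\dots,b_{d-1}]$ with $A$, $B$ the two generic monic polynomials of degree $d$), the polynomials $R_j$ and $R_{j-1}$ again have the ``expected'' degrees $j$ and $j-1$ respectively. The first identity $U_{j-1}V_j - U_jV_{j-1} = (-1)^j r_j^2$ is the analogue, in the subresultant world, of the classical Bézout/Wronskian relation from the extended Euclidean algorithm. I would obtain it from the defining relation $A U_j + B V_j = R_j$ (and the same with $j-1$) by eliminating $A$ and $B$: form the $2\times 2$ system with right-hand sides $R_j$, $R_{j-1}$, so that $A$ and $B$ are expressed via Cramer's rule with denominator $U_{j-1}V_j - U_j V_{j-1}$. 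Since $A$ is monic of degree $d$ and $R_j$, $R_{j-1}$ have degrees $j < j-1 + 1$, a degree count forces $U_{j-1}V_j - U_j V_{j-1}$ to be the leading coefficient times a unit; tracking signs and leading terms (the degree-$j$ coefficient of $R_j$ is $r_j$, and the top coefficients of $U_j$, $V_j$ are controlled by the second identity) pins the constant down to $(-1)^j r_j^2$. Equivalently, one can run the extended subresultant recurrences \eqref{eq:recUj}--\eqref{eq:recVj} and check the claimed quantity is preserved up to the factor $r_j^2 r_{j+1}^{-2}$ coming from one step, which telescopes.

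For the second identity, I would compute the top-degree coefficients of the cofactors directly from Cramer's formulae: $U_j$ has degree $< d - j$, and its coefficient in $X^{d-j-1}$ is (up to sign) the maximal minor of the truncated Sylvester matrix obtained by deleting the appropriate row. Because $A$ and $B$ are monic of degree $d$, that minor is, after the obvious column/row manipulations, exactly the determinant computing $r_{j+1}$, up to the sign $(-1)^j$; the analogous computation for $V_j$ gives the opposite sign since the roles of the two blocks in the Sylvester matrix are swapped. This is the most ``hands-on'' step but it is a finite determinant bookkeeping exercise, and it is the natural place to fix, once and for all, the sign conventions that the other three identities will inherit.

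The third and fourth identities express a ``shift invariance'' of subresultants: applying the subresultant construction to the pair $(R_j, R_{j-1})$ (of degrees $j$, $j-1$) reproduces, up to an explicit power of $r_j$, the subresultants $R_k$ of the original pair $(A,B)$; and dually for the cofactors read backwards. The cleanest route is again functoriality plus the explicit recurrence: by \eqref{eq:recRj} the sequence $(R_j, R_{j-1}, R_{j-2}, \dots)$ satisfies the subresultant recursion with leading coefficients $r_j, r_{j-1}, \dots$, so it literally \emph{is} (up to the normalizing powers of the $r$'s that \eqref{eq:recRj} inserts) the subresultant sequence of its first two terms; making the normalization explicit gives the exponent $2(j-k-1)$. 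The fourth identity follows from the third by the symmetry $U \leftrightarrow R$ that is visible in \eqref{eq:recUj} versus \eqref{eq:recRj}: the cofactor sequence obeys the very same recursion, read in the opposite direction, so the same argument applies with $d - j$ in place of $j$ and $U_{d-1-k}$ in place of $R_k$. The hard part will be keeping all the degree hypotheses genuinely satisfied — this is exactly why one works over the generic ring, where every principal subresultant is nonzero and every $R_j$ has degree exactly $j$; the general statement then follows by base change along $\ring \to \ring[A,B]$ specializing the generic coefficients, since both sides are polynomial in the coefficients of $A$ and $B$.

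Finally, for the closing assertion, I would read off from the Sylvester matrix of $\psi_{j}$ (or directly from Cramer's formula for $r_j = R_j[j]$) that $r_j$ is a polynomial in the entries of that matrix, and those entries involve only the coefficients of $A$ and $B$ of degree $\geq j - (d-j) + 1$ — that is, the top $2(d-j) - 1$ coefficients of each — because the truncated Sylvester matrix for the $j$-th subresultant uses only $d_A + d_B - j - \max(j,0) = 2d - 2j$ ... more precisely, counting the rows that actually appear, only the $2(d-j)-1$ highest coefficients of $A$ and of $B$ enter. I expect this last point to be the one requiring the most care in indexing, but it is purely a matter of identifying which rows of the Sylvester matrix survive truncation.
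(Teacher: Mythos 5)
Your proposal follows essentially the same strategy as the paper's proof: reduce to the generic monic polynomials over $\Z[a_0,\dots,a_{d-1},b_0,\dots,b_{d-1}]$ by functoriality (where all principal subresultants are nonzero, so the recurrences \eqref{eq:recRj}--\eqref{eq:recVj} apply), and then read the third and fourth identities off a rescaled form of the Euclide-like recursion. The paper proves the first two identities also directly from the recurrences, whereas you sketch alternative routes (elimination via Cramer's rule for the first, determinantal bookkeeping on Sylvester minors for the second and for the final ``depends only on the top $2(d-j)-1$ coefficients'' claim); these are fine, but note that your elimination argument for the first identity quietly uses the second identity to normalize the leading coefficient, so the order of proof matters and is worth making explicit.
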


\begin{proof}

By functoriality of subresultants, we may assume that $\ring = \Z[a_0, 
\ldots, a_{d-1}, b_0, \ldots, b_{d-1}]$ and that $A$ and $B$ are the two 
generic monic polynomials $A = X^d + \sum_{i=0} a_i X^i$ and $B = X^d + 
\sum_{i=0} b_i X^i$. Under this additional assumption, all principal
subresultant are nonzero. Therefore, the sequences $(R_j)_j$, $(U_j)_j$
and $(V_j)_j$ are given by the recurrences \eqref{eq:recRj}--\eqref{eq:recVj}.
The two first announced relations follow easily. Let now focus on the
third one. We set $\tilde R_j = R_j$ and $\tilde R_k = r_j^{2(j-k-1)} 
\: R_k$ for $k < j$. An easy decreasing induction on $k$ shows that this 
sequence obeys to the recurrence:
$$\tilde R_{k-1} = \tilde r_k^2 \cdot \tilde r_{k+1}^{-2} \cdot 
(\tilde R_{k+1} \,\%\, \tilde R_k)$$
where $\tilde r_j = 1$ and $\tilde r_k$ is the coefficient of $\tilde 
R_k$ of degree $k$ for all $k < j$. Comparing with \eqref{eq:recRj},
this implies that $\tilde R_k$ is the $k$-th subresultant of the pair 
$(R_j, R_{j-1})$ and we are done. The fourth equality and the last
statement are proved in a similar fashion.
\end{proof}

For any fixed index $j \in \{1, \ldots, d-1\}$, we consider the 
function $\psi_j$ that takes a couple $(A,B) \in \ring_d[X]^2$ to the
quadruple $(U_j, U_{j-1}, R_j, R_{j-1})$. It follows from Proposition
\ref{prop:relations} that $\psi_j$ takes its values in the subset 
$\mathcal E_j$ of
$$\big(\ring_{\leq d-j-1}[X]\big) \times \big(\ring_{\leq d-j}[X]\big) \times 
\big(\ring_{\leq j}[X]\big) \times \big(\ring_{\leq j-1}[X]\big)$$
consisting of the quadruples $(\mathcal U_j, \mathcal U_{j-1}, \mathcal
R_j, \mathcal R_{j-1})$ such that:
$$\begin{array}{ll}
&\mathcal U_{j-1}[d{-}j] = (-1)^{j-1} \: \mathcal R_j[j] \smallskip \\
\text{and} &
\Res^{d-j,d-j-1}(\mathcal U_{j-1}, \mathcal U_j) = -\mathcal R_j[j]^{2(d-j-1)}.
\end{array}$$
Let $\mathcal E_j^\times$ be the subset of $\mathcal E_j$ defined by 
requiring that $\mathcal R_j[j]$ is invertible in $\ring$. In the same 
way, we define $\Omega_j^\times$ as the subset of $\ring_d[X]^2$ 
consisting of couples $(A,B)$ whose $j$-th principal subresultants (in 
degree $(d,d)$) is invertible in $\ring$.

\begin{prop}
\label{prop:bijection}
The function $\psi_j$ induces a bijection between
$\Omega_j^\times$ and $\mathcal E_j^\times$.
\end{prop}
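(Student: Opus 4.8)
The strategy is to construct an explicit inverse to $\psi_j$ on $\mathcal E_j^\times$, using the recurrences \eqref{eq:recRj}--\eqref{eq:recVj} run ``in both directions''. Given a quadruple $(\mathcal U_j, \mathcal U_{j-1}, \mathcal R_j, \mathcal R_{j-1}) \in \mathcal E_j^\times$, the plan is to reconstruct the full subresultant sequence $(R_d = B, R_{d-1}, \ldots, R_0)$ together with the cofactor sequences. Since $\mathcal R_j[j]$ is a unit, the recurrence \eqref{eq:recRj} can be read backwards: from $R_{j}$ and $R_{j-1}$ one recovers $R_{j+1}$ (knowing $r_j = \mathcal R_j[j]$ and the degrees) via $R_{j+1} = r_{j+1}^{2} r_j^{-2} (R_{j+1} \,\%\, R_j) + Q_j R_j$ — but we must go the other way. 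Concretely, by Proposition~\ref{prop:relations}, third relation, the pair $(R_j, R_{j-1})$ determines all lower subresultants $R_k$ for $k < j$ (they are, up to the explicit unit $r_j^{2(j-k-1)}$, the subresultants of $(R_j, R_{j-1})$, which can be computed by a standard descending Euclide-type recurrence since $r_j$ is invertible). Symmetrically, the fourth relation of Proposition~\ref{prop:relations} shows that the pair $(\mathcal U_{j-1}, \mathcal U_j)$ determines the higher cofactors $U_{d-1-k}$ for $k < d-j$, hence all of $U_d, U_{d-1}, \ldots, U_j$; from the $U$'s and the identity $A U_\ell + B V_\ell = R_\ell$ one then reads off everything once $A$ and $B$ are known.

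The crux is thus to recover $(A,B) = (R_{d+1}, R_d)$ — equivalently the top of the sequence $R_d = B$, $R_{d-1} = B - A$. First I would reconstruct the top cofactors: from $\mathcal U_{j}$ and $\mathcal U_{j-1}$, run the recurrence \eqref{eq:recUj} upward. The point is that \eqref{eq:recUj} reads $U_{\ell-1} = r_\ell^2 r_{\ell+1}^{-2}(U_{\ell+1} - Q_\ell U_\ell)$, so if we already know $U_\ell, U_{\ell-1}$ and the relevant leading coefficients, and if those leading coefficients are units, we can solve for $U_{\ell+1}$ — but the quotient $Q_\ell$ is a priori unknown. This is where the degree bookkeeping on $\mathcal E_j^\times$ is used: knowing that in the normal case $\deg U_\ell = d - \ell - 1$ and $\deg R_\ell = \ell$ pins down $Q_\ell$ once enough data is available. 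The cleanest route is to observe that the four relations defining $\mathcal E_j$ (especially $\Res^{d-j,d-j-1}(\mathcal U_{j-1}, \mathcal U_j) = -\mathcal R_j[j]^{2(d-j-1)}$, which is the ``cofactor analogue'' of the resultant identity) are exactly the obstruction-free conditions guaranteeing that this upward reconstruction succeeds and lands back in $\Omega_j^\times$. So the plan is: (i) show $\psi_j$ lands in $\mathcal E_j^\times$ (already essentially done by Proposition~\ref{prop:relations} plus the observation that $r_j$ invertible is preserved); (ii) define a candidate inverse $\phi_j : \mathcal E_j^\times \to \Omega_j^\times$ by the reconstruction just sketched; (iii) check $\phi_j \circ \psi_j = \id$ and $\psi_j \circ \phi_j = \id$, which both follow formally from the fact that all the maps involved are given by the same recurrences, now known to be invertible because the relevant leading coefficients are units.

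I expect the main obstacle to be step (ii): making the reconstruction $\phi_j$ genuinely well-defined, i.e. checking that at each upward step the quotient $Q_\ell$ and the ``new'' polynomial are uniquely and unambiguously determined from the data carried along, and that invertibility of the successive leading coefficients $r_\ell$ propagates (so that one never divides by a non-unit). Here the functoriality trick used in Proposition~\ref{prop:relations} is less directly available — one is working over a genuinely arbitrary ring $\ring$ and cannot specialize to the generic polynomial ring, because $\mathcal E_j^\times$ is not the image of a generic point. The right move is probably to argue by a descending/ascending induction purely formally on the recurrences, packaging the bijection as a composite of two bijections: $\Omega_j^\times \cong \{(R_j, R_{j-1}, U_j, U_{j-1}) \text{ with } R_j[j] \in \ring^\times, \text{ compatibility}\}$ obtained by splitting the sequence at index $j$, using separately that ``$(R_j,R_{j-1}) \leftrightarrow$ bottom half'' and ``$(U_j,U_{j-1}) \leftrightarrow$ top half'' are each bijective. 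Once those two halves are handled, gluing them along the shared fibre $(R_j, R_{j-1}, U_j, U_{j-1})$ and matching against $\mathcal E_j^\times$ is bookkeeping with the relations of Proposition~\ref{prop:relations}.
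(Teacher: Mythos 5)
Your overall strategy — construct an explicit inverse $\varphi_j$ and then check both composites — is the paper's too, but the plan as written has a real gap in step (ii): you never isolate a mechanism that actually produces $(A,B)$ from the quadruple. Your proposed step-by-step reconstruction of the whole chain $R_d,\ldots,R_0$ and $U_d,\ldots,U_j$ runs into exactly the obstructions you name, and they are fatal rather than merely technical: going upward you must divide by the intermediate leading coefficients $r_\ell$ for $j<\ell<d$, and in $\Omega_j^\times$ only $r_j$ is guaranteed invertible, so the recurrence cannot be run backward over an arbitrary ring. Moreover the passage "from the $U$'s and $AU_\ell+BV_\ell=R_\ell$ one reads off everything once $A$ and $B$ are known" is circular — $A,B$ are the unknowns. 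The paper's actual inverse avoids the chain entirely: it forms the cofactors $\mathcal W_j,\mathcal W_{j-1}$ of the pair $(\mathcal U_{j-1},\mathcal U_j)$ (not its subresultants), rescales them by an explicit power $a^{4j-4d+6}$ of $a=\mathcal R_j[j]$ to obtain candidate "missing" cofactors $\mathcal V_j,\mathcal V_{j-1}$ satisfying $\mathcal U_{j-1}\mathcal V_j-\mathcal U_j\mathcal V_{j-1}=a^2$, and then recovers $A,B$ in one stroke by Cramer inversion of the $2\times 2$ linear system $A\mathcal U_\ell+B\mathcal V_\ell=\mathcal R_\ell$ ($\ell=j,j-1$). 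Only $a$ is ever inverted.

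You are also too quick to rule out the functoriality trick. You are right that one cannot \emph{define} $\varphi_j$ by specializing from a generic point of $\mathcal E_j^\times$; but once $\varphi_j$ is written down as component-wise polynomial formulas (as the paper does), the statement $\psi_j\circ\varphi_j=\mathrm{id}$ is a system of polynomial identities in the coordinates of the quadruple, so it suffices to verify it over $\Q(c_0,\ldots,c_{2d})$ with a generic quadruple. That is precisely how the paper finishes: over a field the truncated Sylvester map is bijective, so $(\mathcal U_j,\mathcal R_j)$ and $(U_j,R_j)$ differ by a scalar $\lambda$, and the resultant constraint in the definition of $\mathcal E_j$ forces $\lambda^{2(d-j)-1}=1$, hence $\lambda=1$ because the exponent is odd. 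So the resultant relation you flagged as "the obstruction-free condition" is indeed the crux, but it is used to pin down a scalar ambiguity in the generic verification, not to make an inductive ascent of the Euclidean chain go through.
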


\begin{proof} 
We are going to define the inverse of $\psi_j$. We fix a quadruple 
$(\mathcal U_j, \mathcal U_{j-1}, \mathcal R_j, \mathcal R_{j-1})$ in 
$\mathcal E_j^\times$ and set $a = \mathcal R_j[j]$.
Let $\mathcal W_j$ and $\mathcal W_{j-1}$ denote 
the $j$-th cofactors of $(\mathcal U_{j-1}, \mathcal U_j)$ in degree 
$(d{-}j, d{-}j{-}1)$. Define $\mathcal V_j = \alpha \mathcal W_j$ and 
$\mathcal V_{j-1} = -\alpha \mathcal W_{j-1}$ where $\alpha =
a^{4j-4d+6}$. The relation:
\begin{equation}
\label{eq:missingcof}
\mathcal U_{j-1} \mathcal V_j - 
\mathcal U_j \mathcal V_{j-1} = a^2.
\end{equation}
then holds. We now define $A$ and $B$ using the formulae:
\begin{equation}
\label{eq:systemAB}
\left\{\begin{array}{l}
A = (-1)^j \cdot a^{-2} \cdot (\mathcal V_j \mathcal R_{j-1} - \mathcal V_{j-1} \mathcal R_j) \smallskip \\
B = (-1)^{j-1} \cdot a^{-2} \cdot (\mathcal U_j \mathcal R_{j-1} - \mathcal U_{j-1} \mathcal R_j) \\
\end{array}\right.
\end{equation}
and let $\varphi_j$ be the function mapping $(\mathcal U_j, \mathcal 
U_{j-1}, \mathcal R_j, \mathcal R_{j-1})$ to $(A,B)$. The composite 
$\varphi_j \circ \psi_j$ is easily checked to be the identity: indeed, 
if $\psi_j(A,B) = (\mathcal U_j, \mathcal U_{j-1}, \mathcal R_j, 
\mathcal R_{j-1})$, the relation~\eqref{eq:missingcof} implies that 
$\mathcal V_{j-1}$ and $\mathcal V_j$ are the missing cofactors and, 
consequently, $A$ and $B$ have to be given by the 
system~\eqref{eq:systemAB}.

To conclude the proof, it remains to prove that the composite in 
the other direction $\psi_j \circ \varphi_j$ is the identity as well. 
Since both $\varphi_j$ and $\psi_j$ are componant-wise given by 
polynomials, we can use functoriality and assume that $\ring$ is 
the field $\Q(c_0, c_1, \ldots, c_n)$ (with $n = 2d$) and that 
each variable $c_i$ corresponds to one coefficient of $\mathcal U_j$, 
$\mathcal U_{j-1}$, $\mathcal R_j$ and $\mathcal R_{j-1}$ with the
convention that $c_0$ (resp. $(-1)^{j-1} c_0$) is used for the leading
coefficients of $\mathcal R_j$ (resp. $\mathcal U_{j-1}$). Set:
$$\begin{array}{ll}
& (A,B) = \varphi_j(\mathcal U_j, \mathcal U_{j-1}, 
\mathcal R_j, \mathcal R_{j-1}) \smallskip \\
\text{and} & (U_j, U_{j-1}, R_j, R_{j-1}) = \psi_j(A,B)
\end{array}$$
Since $\ring$ is a field and $\mathcal R_j[j]$ does not vanish, the 
Sylvester mapping
$$\begin{array}{rcl}
\ring_{<d-j}[X] \times \ring_{<d-j}[X] & \to & 
\ring_{<2d-j}[X]/\ring_{<j}[X] \smallskip \\
(U,V) & \mapsto & AU + BV
\end{array}$$
has to be bijective. Therefore there must exist $\lambda \in \ring$ 
such that $\mathcal R_j = \lambda \cdot R_j$ and $\mathcal U_j = \lambda 
\cdot U_j$. Similarly $(\mathcal R_{j-1}, \mathcal U_{j-1}) = \mu \cdot
(R_{j-1}, U_{j-1})$ for some $\mu \in \ring$. Identifying the leadings 
coefficients, we get $\lambda = \mu$. Writing
$\Res^{d-j,d-j-1}(\mathcal U_{j-1}, \mathcal U_j) = 
\Res^{d-j,d-j-1}(U_{j-1}, U_j)$,
we get $\lambda^{2(d-j)-1} = 1$. Since the exponent is odd, this implies 
$\lambda = 1$ and we are done.
\end{proof}

\begin{cor}
\label{cor:bijection}
We assume that $\ring = \A$. Then the map $\psi_j : \Omega_j^\times
\to \mathcal E_j^\times$ preserves the Haar measure.
\end{cor}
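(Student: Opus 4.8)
The plan is to recognise $\psi_j$ as an isomorphism of smooth $\A$-schemes and to exploit the elementary principle that any such isomorphism automatically preserves the canonically normalised measures on $\A$-points, the relevant density being a unit.

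First I would pin down the two measures. The source $\Omega_j^\times$ is the open subscheme of $\Omega = \A_d[X]^2 \cong \A^{2d}$ obtained by inverting the $j$-th principal subresultant --- a polynomial in the $2d$ coefficients of $A$ and $B$ --- so it is smooth over $\A$ of relative dimension $2d$ and, by construction, carries the restriction of the Haar measure of $\A^{2d}$, namely the measure $\mu_\Omega$ attached to the translation-invariant top form $\omega_\Omega = da_0 \wedge \cdots \wedge db_{d-1}$. On the target side, $\mathcal E_j$ is cut out inside the free module $M = \A_{\leq d-j-1}[X] \times \A_{\leq d-j}[X] \times \A_{\leq j}[X] \times \A_{\leq j-1}[X]$ of rank $2d+2$ by the two equations $f_1 := \mathcal U_{j-1}[d{-}j] - (-1)^{j-1}\mathcal R_j[j] = 0$ and $f_2 := \Res^{d-j,d-j-1}(\mathcal U_{j-1},\mathcal U_j) + \mathcal R_j[j]^{2(d-j-1)} = 0$. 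By Proposition~\ref{prop:bijection} --- whose proof exhibits $\psi_j$ and $\varphi_j$ as mutually inverse morphisms defined over $\Z$ once $\mathcal R_j[j]$ is inverted --- the locus $\mathcal E_j^\times$ is isomorphic to $\Omega_j^\times$, hence is itself smooth over $\A$ of relative dimension $2d$; in particular $df_1$ and $df_2$ are linearly independent modulo $\pi$ at every $\A$-point. There is then a canonical nowhere-vanishing top form $\omega_{\mathcal E}$ on $\mathcal E_j^\times$, characterised by $df_1 \wedge df_2 \wedge \widetilde\omega_{\mathcal E} = \omega_M$ for any local lift $\widetilde\omega_{\mathcal E}$, with $\omega_M$ the standard top form on $M$; the associated measure $\mu_{\mathcal E}$ is the Haar measure referred to in the statement.

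Next I would run the main argument. The map $\psi_j : \Omega_j^\times \to \mathcal E_j^\times$ is an isomorphism, and each of its four components $U_j, U_{j-1}, R_j, R_{j-1}$ is a polynomial with integer coefficients in the coefficients of $A$ and $B$ (a determinant, resp.\ a collection of minors, of the Sylvester matrix), so $\psi_j$ is an honest morphism of $\A$-schemes. The pullback $\psi_j^\ast\omega_{\mathcal E}$ is again a generator of the free rank-one $\mathcal O(\Omega_j^\times)$-module of relative top differential forms, hence equals $u \cdot \omega_\Omega$ for a unit $u \in \mathcal O(\Omega_j^\times)^\times$. For any $x \in \Omega_j^\times(\A)$ the evaluation homomorphism $\mathcal O(\Omega_j^\times) \to \A$ sends the unit $u$ to a unit of $\A$, so $\val(u(x)) = 0$. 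By the change-of-variables formula for integrals over $\A$, $\psi_j$ multiplies $\mu_\Omega$ locally by the density $|u| \equiv 1$ and therefore pushes $\mu_\Omega$ forward onto $\mu_{\mathcal E}$, which is precisely the assertion.

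The step deserving care is the smoothness of $\mathcal E_j^\times$ and the attendant description of $\mu_{\mathcal E}$: rather than deducing smoothness from Proposition~\ref{prop:bijection}, one can also verify it by hand, computing a single partial derivative of $\Res^{d-j,d-j-1}(\mathcal U_{j-1},\mathcal U_j)$ with respect to a suitably chosen coefficient of $\mathcal U_j$ and noting that, because $\mathcal R_j[j]$ is invertible on $\mathcal E_j^\times$, this derivative is a unit modulo $\pi$. If one prefers to avoid differential forms entirely, an equivalent route is to introduce explicit local charts on the two $2d$-dimensional $p$-adic manifolds and apply the chain rule to $\psi_j \circ \varphi_j = \id$ and $\varphi_j \circ \psi_j = \id$: by \eqref{eq:systemAB} together with the cofactor formulae, $\varphi_j$ is given by polynomials divided only by powers of $\mathcal R_j[j]$, so on $\mathcal E_j^\times$ it too has all partial derivatives in $\A$; the two Jacobian determinants are then mutually inverse elements of $\A$, hence both units, and one concludes $|J_{\psi_j}| = 1$ once more.
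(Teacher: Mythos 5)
Your argument is correct, but it follows a genuinely different route from the paper's. The paper's proof is a one-liner: Proposition~\ref{prop:bijection} holds over an \emph{arbitrary} commutative ring, so applying it with $\ring = \A/\pi^n\A$ shows that $\psi_j \bmod \pi^n$ is a bijection for every $n$; since the Haar measures on $\Omega_j^\times(\A)$ and on $\mathcal E_j^\times(\A)$ are by definition the projective limits of the normalised counting measures on the finite sets $\Omega_j^\times(\A/\pi^n\A)$ and $\mathcal E_j^\times(\A/\pi^n\A)$, and $\psi_j$ commutes with reduction mod $\pi^n$, measure-preservation is immediate. This sidesteps entirely the differential-geometric set-up --- no smoothness, no canonical top form, no Jacobian --- and it also makes transparent \emph{what} the Haar measure on $\mathcal E_j^\times$ is, which your argument must take as a definition (namely, the measure attached to the canonical top form, which one then has to know agrees with the counting-measure normalisation for smooth $\A$-schemes). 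Your change-of-variables approach buys generality --- it would give a density $|J_{\psi_j}|$ even if $\psi_j$ were merely étale rather than an isomorphism --- and the observation that the Jacobians of $\psi_j$ and $\varphi_j$ are mutually inverse elements of $\A$, hence units, is a clean way to see the result once the framework is in place. But it is substantially heavier than needed here: the essential input in both proofs is Proposition~\ref{prop:bijection}, and the paper extracts the conclusion from it directly by exploiting the fact that the proposition was proved over an arbitrary base ring, not just over a field or a domain.
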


\begin{proof}
Proposition \ref{prop:bijection} applied with the quotient rings $\ring 
= \A/\pi^n\A$ shows that $(\psi_j \text{ mod } \pi^n)$ is a bijection 
for all $n$. This proves the Corollary.
\end{proof}

\noindent 
\textbf{The distribution in the residue field.}
We assume in this paragraph that $\ring$ is a finite field of 
cardinality $q$. We equip $\Omega_\ring = \ring_d[X]^2$ with the uniform 
distribution. For $j \in \{0, \ldots, d-1\}$ and $(A,B) \in \Omega_\ring$,
we set $\bar V_j(A,B) = 1$ if $r_j(A,B)$ vanishes and $\bar V_j(A,B) = 0$
otherwise. The functions $\bar V_j$'s define random variables over 
$\Omega_\ring$.

\begin{prop}
\label{prop:distribk}
With the above notations, the $\bar V_j$'s are mutually independant
and they all follow a Bernoulli distribution of parameter $\frac 1 q$.
\end{prop}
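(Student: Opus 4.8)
The plan is to reduce the statement, via an elementary measure‑preserving change of variables, to a slightly more flexible model in which the second polynomial is forced to have smaller degree, and then to run an induction on the degree in which one conditions on the degree of that second polynomial.

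First, it suffices to determine the joint law of all the $\bar V_j$'s, equivalently (by inclusion--exclusion) to compute $\P[\bar V_j = 0 \text{ for all } j \in S]$ for every $S \subseteq \{0, \ldots, d-1\}$. The affine map $(A,B) \mapsto (A, A-B)$ is a measure‑preserving bijection $\ring_d[X]^2 \xrightarrow{\sim} \ring_d[X] \times \ring_{<d}[X]$; performing elementary row operations on the truncated Sylvester matrices and using that, for a \emph{monic} first argument, deflating the degree of the second argument multiplies a subresultant by a unit, one checks that $r_j(A,B)$ vanishes if and only if the $j$-th principal subresultant of $(A, A-B)$ computed in degree $(d, d-1)$ vanishes (the boundary case $j = d-1$ being handled directly: the relevant $2\times 2$ determinant equals $(B-A)[d-1]$, which matches as well). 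So it is enough to prove the following: for $(P,Q)$ uniform on $\ring_n[X] \times \ring_{<n}[X]$ and $\rho_k := \Res^{n, n-1}_k(P,Q)[k]$ the $k$-th principal subresultant, the indicators $\mathbbm 1_{\{\rho_k = 0\}}$, $0 \le k \le n-1$, are independent Bernoulli variables of parameter $q^{-1}$.

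We prove this by induction on $n$, the case $n = 1$ being trivial ($\rho_0 = Q$). For $n \ge 2$, we stratify the sample space by $m := \deg Q \in \{-\infty, 0, 1, \ldots, n-1\}$; here $\P[\deg Q = m] = (1-q^{-1})\, q^{-(n-1-m)}$ for $m \ge 0$ and $\P[Q = 0] = q^{-n}$. By the description of subresultants through the pseudo‑remainder sequence (Proposition \ref{prop:prem}), on $\{\deg Q = m\}$ one has $\rho_k = 0$ for all $k > m$ while $\rho_m$ is a unit times a positive power of $\lc(Q)$, hence nonzero; and on $\{Q = 0\}$ all $\rho_k$ vanish. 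Moreover, writing $\tilde Q = \lc(Q)^{-1} Q \in \ring_m[X]$ and the Euclidean division $P = T\tilde Q + R$ with $T \in \ring_{n-m}[X]$ and $R \in \ring_{<m}[X]$, the correspondence $(P,Q) \leftrightarrow (\lc(Q), T, \tilde Q, R)$ is a measure‑preserving bijection of the stratum $\{\deg Q = m\}$ (it is a bijection of finite sets of equal cardinality); hence, conditionally on that stratum, $(\tilde Q, R)$ is uniform on $\ring_m[X] \times \ring_{<m}[X]$, and, since the subresultant sequence of $(P,Q)$ prolongs that of $(Q, R)$, Proposition \ref{prop:prem} (or, on the stratum $\{\deg Q = n-1\}$, Proposition \ref{prop:relations}(iii) via Proposition \ref{prop:bijection}) gives $\mathbbm 1_{\{\rho_k(P,Q) = 0\}} = \mathbbm 1_{\{\rho_k(\tilde Q, R) = 0\}}$ for all $k < m$. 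It remains to telescope: fix $(e_0, \ldots, e_{n-1}) \in \{0,1\}^n$ (the target values of the $\mathbbm 1_{\{\rho_k=0\}}$). If all $e_k = 1$, the event is exactly $\{Q = 0\}$, of probability $q^{-n}$; otherwise, with $m := \max\{k : e_k = 0\}$, the constraints $\rho_k = 0$ $(k > m)$ and $\rho_m \ne 0$ force $\deg Q = m$, so by the above and the induction hypothesis,
\[
\P\big[\mathbbm 1_{\{\rho_k = 0\}} = e_k \text{ for all } k\big] \;=\; (1-q^{-1})\, q^{-(n-1-m)} \cdot \prod_{k < m} \P[\mathrm{Ber}(q^{-1}) = e_k].
\]
Since $e_k = 1$ for $k > m$ and $e_m = 0$, the prefactor $(1-q^{-1})\, q^{-(n-1-m)}$ equals $\prod_{k \ge m} \P[\mathrm{Ber}(q^{-1}) = e_k]$, so the right‑hand side is $\prod_{k=0}^{n-1} \P[\mathrm{Ber}(q^{-1}) = e_k]$, which is exactly the independence plus Bernoulli law we want.

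The delicate part is the bookkeeping on the degenerate strata $\{\deg Q = m\}$ with $m < n-1$: one must verify through Proposition \ref{prop:prem} both that the ``top'' principal subresultants vanish in exactly the stated pattern and that the tail of the subresultant sequence is, up to unit factors, the subresultant sequence of the uniformly distributed smaller pair $(\tilde Q, R)$ — the measure‑preservation of the Euclidean‑division correspondence is precisely what makes the induction close. The initial translation from $(A,B)$ to $(P,Q) = (A, A-B)$ (the row‑operation and degree‑deflation identities relating $\Res^{d,d}_j$ and $\Res^{d,d-1}_j$) is routine but deserves to be spelled out.
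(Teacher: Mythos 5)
Your proof is correct, and it takes a genuinely different route from the paper's, although both ultimately rest on the same bijection implicit in the Euclidean algorithm. The paper works directly with $(A,B)\in\ring_d[X]^2$, fixes a set $J$ of nonvanishing indices, and builds a one-shot injection $\Lambda_J$ from $\Omega_\ring(J)$ into a product of ``truncation'' spaces of the pseudo-remainder sequence; it then upper-bounds each $\Card\Omega_\ring(J)$ by the cardinality of the codomain and deduces equality by summing over all $J$ and matching against $\Card\Omega_\ring = q^{2d}$. You instead pass to the asymmetric model $(P,Q)\in\ring_n[X]\times\ring_{<n}[X]$ (which removes the redundant initial step $S_1 = B-A$), and run an induction on $n$: conditioning on $\deg Q = m$, the map $(P,Q)\leftrightarrow(\lc Q, T, \tilde Q, R)$ is an explicit measure-preserving bijection on the stratum, after which Proposition~\ref{prop:prem} lets you transfer the vanishing pattern of the low-index principal subresultants of $(P,Q)$ to those of the uniform pair $(\tilde Q, R)$, and a telescoping computation closes the induction. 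What the paper's argument buys is brevity (one map, one counting step); what yours buys is a transparent, step-by-step measure-preservation statement and a self-contained probability computation that does not need the ``sum all the $\le$'s to force $=$'' trick. Two small points you should make explicit if you write this up in full: (a) the comparison $\rho_k(P,Q) = 0 \iff \rho_k(\tilde Q, R) = 0$ for $k<m$ is most cleanly justified by observing, via Proposition~\ref{prop:prem}, that the set of nonvanishing indices is exactly the degree sequence of the pseudo-remainder sequence, and that the PRS of $(P,Q)$ past $S_0$ and the PRS of $(\tilde Q, R)$ have the same degree sequence (term-by-term unit multiples); and (b) the initial translation $\Res^{d,d}_j(A,B) = \pm\Res^{d,d-1}_j(A,B-A)$ follows from a single Laplace expansion of $\psi_j$ along the top-degree row after the column operation $B\text{-columns} \mapsto (B-A)\text{-columns}$, so the ``deflation'' sublemma you invoke is not actually needed.
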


\begin{proof} 
Given $J \subset \{0, \ldots, d-1\}$, we denote by 
$\Omega_\ring(J)$ the subset of $\Omega_\ring$ consisting of
couples $(A,B)$ for which $r_j(A,B)$ does not vanish if and only if $j 
\in J$. We want to prove that $\Omega_\ring(J)$ has cardinality 
$q^{2d-\Card J} (q-1)^{\Card J}$. To do this, we introduce several 
additional notations. First, we write $J = \{n_1, \ldots, 
n_\ell\}$ with $n_1 > n_2 > \cdots > n_\ell$ and set $n_{\ell+1} = 
0$ by convention. Given $n$ and $m$ two integers with $m < n$, we let 
$V_{(m,n)}$ denote the set of polynomials of the form
$a_m X^m + a_{m+1} X^{m+1} \cdots + a_n X^n$
with $a_i \in \ring$ and $a_n \neq 0$. Clearly, $V_{(m,n)}$ has
cardinality $(q-1) q^{n-m}$. If $P$ is any polynomial of 
degree $n$ and $m < n$ is an integer, we further define $P[m{:}] \in 
V_{(m,n)}$ as the polynomial obtained from $P$ by removing its monomials 
of degree $< m$. Finally, given $(A,B)$ in $\Omega_\ring$, we 
denote by $(S_i(A,B))$ its subresultant pseudo-remainder sequence as 
defined in \S \ref{subsec:subres}. We note that, if $(A,B) \in 
\Omega_\ring(J)$, the sequence $(S_i(A,B))$ stops at $i = \ell$ and we 
have $\deg S_i = n_i$ for all $i$. We now claim that the mapping
$$\begin{array}{rcl} 
\Lambda_J : \,\, 
\Omega_\ring(J) & \to & 
V_{(n_1,n_2)} \times \cdots \times V_{(n_\ell,n_{\ell+1})} \smallskip \\
(A,B) & \mapsto & 
\big(S_i(A,B)[n_{i+1}{:}]\big)_{1 \leq i \leq \ell}
\end{array}$$ 
is injective. In order to establish the claim, we remark that the 
knowledge of $S_{i-1}(A,B)$ and $S_i(A,B)[n_{i+1}{:}]$ (for some $i$) 
is enough to reconstruct the quotient of the Euclidean division of 
$S_i(A,B)$ by $S_{i-1}(A,B)$. Thus, one can reconstruct $S_i(A,B)$ from 
the knowledge of 
$S_{i-2}(A,B)$, $S_{i-1}(A,B)$ and $S_i(A,B)[n_{i+1}{:}]$. We deduce 
that $\Lambda_J(A,B)$ determines uniquely all $S_i(A,B)$'s 
and finally $A$ and $B$ themselves. This proves the claim.

To conclude the proof, we note that the claim implies that the cardinality of 
$\Omega_\ring(J)$ is at most $q^{2d-\ell} (q-1)^\ell$. Summing up these 
inequalities over all possible $J$, we get $\Card \Omega_\ring \leq 
q^{2d}$. This latest inequality being an equality, we must have
$\Card \Omega_\ring(J) = q^{2d-\Card J} (q-1)^{\Card J}$ for all $J$.
\end{proof}

\noindent
\textbf{Proof of Theorem \ref{th:deltaj}.}
We assume first that $j < d-1$. Proposition \ref{prop:distribk} above 
ensures that $r_{j+1}$ is invertible in $\A$ with probability 
$(1 - q^{-1})$. Moreover, assuming that this event holds, Corollary 
\ref{cor:bijection} implies that $R_j$ is distributed in $\A_{\leq 
j}[X]$ according to the Haar measure. An easy computation gives
$\P[\delta_j \geq m\,|\, r_{j+1} \in \A^\times ] =
\frac{q(q^j - 1)}{q^{j+1}-1}$
and therefore:
$$\P[\delta_j \geq m] \geq (1 - q^{-1}) \cdot
\frac{q(q^j - 1)}{q^{j+1}-1} =
\frac{(q-1)(q^j - 1)}{q^{j+1}-1}.$$
The case $j = d-1$ is actually simpler. Indeed, the same argument works 
expect that we know for sure that $r_{j+1} = r_d$ is invertible since it 
is equal to $1$ by convention. In that case, the probability is then 
equal to $\frac{q(q^j - 1)}{q^{j+1}-1}$.

\medskip

\noindent
\textbf{Proof of Theorem \ref{th:lawVj}.}
We fix $j \in \{0, \ldots, d-1\}$. 
We define the random variable $V_j^{(0)}$ as the greatest (nonnegative) 
integer $v$ such that all principal subresultants $r_{j'}$ have positive 
valuation for $j'$ varying in the open range $(j-v, j+v)$ (with the
convention that $r_{j'} = 0$ whenever $j' < 0$). It is clear from the 
definition that $r_{j-v}$ or $r_{j+v}$ (with $v = V_j^{(0)}$) has 
valuation $0$. Moreover, assuming first that $\val(r_{j+v}) = 0$, we 
get by Proposition \ref{prop:relations}:
$$\begin{array}{rl}
& \val(r_j) = v + \val\big(r_v^{j-v,j-v+1}(A^{(0)}, B^{(0)})\big) 
\smallskip \\
\text{with} &
A^{(1)} = \frac 1 {r_{j+v} X^{j-v-1}} \cdot R_{j+v}[j{-}v{-}1\:{:}], \smallskip \\
\text{and} &
B^{(1)} = A^{(1)} + \frac 1 {\pi X^{j-v-1}} \cdot R_{j+v-1}[j{-}v{-}1\:{:}]
\end{array}$$
where we recall that, given a polynomial $P$ and an integer $m$, the 
notation $P[m{:}]$ refers to the polynomial obtained from $P$ by
removing its monomials of degree strictly less than $m$.
We notice that all the coefficients of $B^{(1)}$ lie in $\A$ 
because $r_{j'}$ has positive valuation for $j' \in (j-v, j+v)$.
Furthermore, Corollary \ref{cor:bijection} shows that the 
couple $(A^{(1)}, B^{(1)})$ is distributed according to the Haar measure 
on $(\A_{2v-1}[X])^2$. If $\val(r_{j+v}) = 0$, one can argue similarly 
by replacing $R_{j+v}$ and $R_{j+v-1}$ by the cofactors $U_{j-v}$ and 
$U_{j-v+1}$ respectively. Replacing $(A,B)$ by $(A^{(1)}, B^{(1)})$, we 
can now define a new random variable $V_j^{(1)}$ and, continuing this 
way, we construct an infinite sequence $V_j^{(m)}$ such that
$V_j = \sum_{m \geq 0} V_j^{(m)}$.

We now introduce a double sequence $(X_i^{(m)})_{0 \leq i < d, m 
\geq 0}$ of mutually independant random variables with Bernoulli
distribution of parameter $\frac 1 q$ and we agree to set 
$X_{j'}^{(m)} = 0$ for $j' < 0$ and $X_{j'}^{(m)} = 1$ for $j \geq d$.
It follows from Proposition \ref{prop:distribk} (applied with $\ring = 
k$) that $V_j^{(0)}$ has the same distribution than 
$Y_j^{(0)} = \sum_{i=1}^d \min(X_{j-i}^{(0)}, \ldots, X_{j+i}^{(0)})$.
In the same way, keeping in mind that $A^{(1)}$ and $B^{(1)}$ have both 
degree $2V_j^{(0)} - 1$, we find that $V_j^{(1)}$ has the same 
distribution than $\sum_{i=1}^{V_j^{(0)} - 1} \min(X_{j-i}^{(1)}, 
\ldots, X_{j+i}^{(1)})$, which can be rewritten as
$Y_j^{(1)} = \sum_{i=1}^d \min(X_{j-i}^{(0)}, X_{j-i}^{(1)}, \ldots, 
X_{j+i}^{(0)}, X_{j+i}^{(1)})$. More precisely,
the equidistribution of $(A^{(1)}, B^{(1)})$ shows that the 
joint distribution $(V_j^{(0)}, V_j^{(1)})$ is the same as those
of $(Y_j^{(0)}, Y_j^{(1)})$. Repeating the argument, we see that
$(V_j^{(m)})_{m \geq 0}$ is distributed as $(Y_j^{(m)})_{m \geq 0}$
where:
$$Y_j^{(m)} = \sum_{i=1}^d \min(X_{j-i}^{(0)}, \ldots X_{j-i}^{(m)}, 
\ldots, X_{j+i}^{(0)}, \ldots, X_{j+i}^{(m)}).$$
Setting finally $X_i = \sum_{m \geq 0} \min(X_1^{(0)}, \ldots, 
X_i^{(m)})$, we find the $X_i$'s ($0 \leq i < d$) are mutually 
independant and that they all follow a geometric distribution of 
parameter $(1 - q^{-1})$. We now conclude the proof by noting that 
$Y_j$ equals $\sum_{i=1}^d \min(X_{j-i}, \ldots, X_{j+i})$ (recall
that the $X_i^{(m)}$'s only take the values $0$ and $1$).

\section{A stabilized algorithm for computing subresultants}
\label{sec:stable}

We have seen in the previous sections that Euclide-like algorithm are 
unstable in practice. On the other hand, one can compute subresultants 
in a very stable way by evaluating the corresponding minors of the 
Sylvester matrix. Doing so, we do not loose any significant digit. Of 
course, the downside is the rather bad efficiency.

In this section, we design an algorithm which combines the two
advantages: it has the same complexity than Euclide's algorithm
and it is very stable in the sense that it does not loose any
significant digit. 
This algorithm is deduced from the subresultant pseudo-remainder
sequence algorithm by applying a ``stabilization process'', whose 
inspiration comes from \cite{padicprec}.

\subsection{Crash course on ultrametric precision}
\label{subsec:crashcourse}

In this subsection, we briefly report on and complete the results of 
\cite{padicprec} where the authors draw the lines of a general framework 
to handle a sharp (often optimal) track of ultrametric precision. In 
what follows, the letter $W$ still refers to a complete DVR while the 
letter $K$ is used for its fraction field.

\subsubsection{The notion of lattice}

As underlined in Remark~\ref{rem:balls}, the usual way of tracking 
precision consists in replacing elements of $\A$ --- which cannot fit 
entirely in the memory of a computer --- by balls around them. Using 
this framework, a software manipulating $d$ variables in $\A$ will work 
with $d$ ``independant'' balls. The main proposal of \cite{padicprec} is 
to get rid of this ``independance'' and model precision using a unique
object contained in a $d$-dimensional vector space. In order to be more
precise, we need the following definition.

\begin{deftn}
A $\A$-lattice in a finite dimensional vector space $E$ over $K$ is
a $\A$-submodule of $E$ generated by a $K$-basis of $E$.
\end{deftn}

Although the defintion of a lattice is similar to that of $\Z$-lattice 
in $\R^d$, the geometrical representation of it is quite different. 
Indeed, the elements of $\A$ themselves are not distributed as $\Z$ is 
in $\R$ but rather from a ball inside $K$ (they are exactly elements of 
norm $\leq 1$). More 
generally, assume that $E$ is equipped with a ultrametric norm $\Vert 
\cdot \Vert_E$ compatible with that on $K$ (\emph{i.e.} $\Vert \lambda x 
\Vert_E = |\lambda| \cdot \Vert x \Vert_E$ for $\lambda \in K$, $x \in 
E$). (A typical example is $E = K^n$ equipped with the sup norm.) One
checks that the balls
$$B_E(r) = \big\{ \,\, x \in E \quad\big|\quad \Vert x \Vert_E \leq r \,\,\big\}$$
are all lattices in $E$. Moreover, any lattice is deduced from $B_E(1)$ 
by applying a bijective linear endomorphism of $E$. Therefore, lattices 
should be thought as special neighborhoods of $0$ (see Figure~\ref{fig:lattice}).
\begin{figure}
\hfill
\begin{tikzpicture}
\begin{scope}[beige, rounded corners=10pt]
\fill (0.1,0.2) rectangle (3.9,5.8);
\end{scope}

\begin{scope}[xshift=2cm,yshift=3cm,rotate=20]
\draw[black,fill=green!50] (-0.7,-1.5) rectangle (0.7,1.5);
\draw[black,->] (-1,0)--(1,0);
\draw[black,->] (0,-2)--(0,2);
\end{scope}

\begin{scope}[very thick, rounded corners=10pt]
\draw (0.1,0.2) rectangle (3.9,5.8);
\end{scope} 
\end{tikzpicture}
\hfill \null

\caption{Picture of a lattice in the ultrametric world}
\label{fig:lattice}
\end{figure}
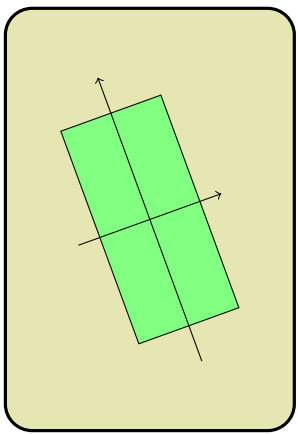
As a consequence, cosets of the form $x + H$, where $H$ is a lattice, 
appear as interesting candidates to model precision. This feeling is 
consolidated by the following result which roughly speaking claims that 
such cosets behave quite well under differentiable maps.

\begin{lem}[\cite{padicprec}, Lemma 3.4]
\label{lem:padicprec}
Let $E$ and $F$ be two normed finite dimensional $K$-vector spaces.
Let $f : E \to F$ be a function of class $C^1$ and let $x$
be a point in $K^n$ at which the differential of $f$, denoted by
$f'(x)$, is surjective.
Then, for all $\rho \in (0,1]$, there exists $\delta > 0$ such that
the following equality holds:
\begin{equation}
\label{eq:padicprec}
f(x + H) = f(x) + f'(x)(H)
\end{equation}
for any lattice $H$ satisfying $B_E(\rho r) \subset H \subset B_E(r)$
for some $r < \delta$.
\end{lem}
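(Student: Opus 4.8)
The plan is to reduce to the case $x = 0$ and $f(0) = 0$ by replacing $f$ with $h \mapsto f(x+h) - f(x)$; this new map is again of class $C^1$, its differential at $0$ is still $L := f'(x)$, which is surjective, and the claimed identity becomes $f(H) = L(H)$. I would then prove the two inclusions $f(H) \subseteq L(H)$ and $L(H) \subseteq f(H)$ separately, using two ingredients. The analytic ingredient is that, $f$ being $C^1$, there is a modulus $\eta$ with $\eta(r) \to 0$ as $r \to 0$ such that $\Vert f(h) - f(h') - L(h-h')\Vert_F \leq \eta(r) \Vert h - h'\Vert_E$ for all $h, h' \in B_E(r)$ (the standard consequence of strict differentiability with continuous derivative). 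The algebraic ingredient is that a surjective $K$-linear map sends lattices to lattices and is open: there is $c > 0$ with $B_F(cs) \subseteq L(B_E(s))$ for all $s$, so in particular $B_F(c\rho r) \subseteq L(B_E(\rho r)) \subseteq L(H)$ since $B_E(\rho r) \subseteq H$.

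For the inclusion $f(H) \subseteq L(H)$: take $h \in H \subseteq B_E(r)$ and write $f(h) = L(h) + \big(f(h) - L(h)\big)$. The first summand lies in $L(H)$; the second has norm at most $\eta(r) \Vert h\Vert_E \leq \eta(r) r$, which is $\leq c\rho r$ as soon as $r$ is small enough that $\eta(r) \leq c\rho$, so it lies in $B_F(c\rho r) \subseteq L(H)$. Hence $f(h) \in L(H)$.

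For the reverse inclusion $L(H) \subseteq f(H)$ I would run a Newton-type iteration, and the point requiring care is that the successive corrections must keep the iterates inside the lattice $H$, not merely inside a ball. To arrange this, observe that $0 \to H \cap \ker L \to H \to L(H) \to 0$ is a short exact sequence of $\A$-modules in which $L(H)$, being a finitely generated torsion-free module over the DVR $\A$, is free; hence the sequence splits. Choosing an $\A$-linear section and extending it $K$-linearly yields a section $s : F \to E$ of $L$ with $s(L(H)) \subseteq H$ and $\Vert s(z)\Vert_E \leq C\Vert z\Vert_F$ for some constant $C$. Now fix $y \in L(H)$, set $h_0 = 0$ and $h_{n+1} = h_n + s\big(y - f(h_n)\big)$. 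Since $L \circ s = \id_F$, one computes $y - f(h_{n+1}) = -\big(f(h_{n+1}) - f(h_n) - L(h_{n+1} - h_n)\big)$, whence, as long as the iterates stay in $B_E(r)$, $\Vert y - f(h_{n+1})\Vert_F \leq \eta(r)\Vert h_{n+1}-h_n\Vert_E \leq \eta(r)C\,\Vert y - f(h_n)\Vert_F$. Choosing $\delta$ small enough that $\eta(r)C < 1$ and $\eta(r) \leq c\rho$ for all $r < \delta$, an induction shows that all $h_n$ lie in $H$: indeed $h_n \in H$ gives $f(h_n) \in L(H)$ by the inclusion already proved, hence $y - f(h_n) \in L(H)$, hence $s(y-f(h_n)) \in H$, hence $h_{n+1} \in H \subseteq B_E(r)$. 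The contraction estimate then forces $\Vert y - f(h_n)\Vert_F \to 0$ and makes $(h_n)$ a Cauchy sequence; as $H$ is complete, $h_n$ converges to some $h \in H$, and $f(h) = y$ by continuity, so $y \in f(H)$.

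Combining the two inclusions gives $f(H) = L(H)$, i.e. $f(x+H) = f(x) + f'(x)(H)$. The main obstacle is this second inclusion: it needs a quantitatively controlled inverse (the openness constant $c$ together with the bounded lattice-preserving section $s$, which is where the splitting over the DVR is used) married to the ultrametric contraction coming from the $C^1$ hypothesis, and one must bookkeep the constants so that a single $\delta$, depending only on $\rho$, $f$ and $x$, works uniformly for every lattice $H$ squeezed between $B_E(\rho r)$ and $B_E(r)$.
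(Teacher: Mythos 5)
The paper under review does not prove this lemma; it imports it verbatim from the cited reference [padicprec, Lemma 3.4], so there is no in-paper proof to compare against. That said, your argument is the natural Newton-iteration proof and its structure is sound: reduce to $x=0$, $f(0)=0$; establish $f(H)\subseteq L(H)$ using the strict-differentiability modulus together with the openness of the surjective linear map $L$; then recover $L(H)\subseteq f(H)$ by an iteration whose corrections are kept inside $H$ via a section of $L$ adapted to the lattices. The trick of splitting $0\to H\cap\ker L\to H\to L(H)\to 0$ over the DVR to get a lattice-preserving section is exactly the right device for forcing the iterates to stay in $H$.

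There is, however, one genuine gap. The statement requires a single $\delta$, chosen before $H$, to work uniformly for every lattice squeezed between $B_E(\rho r)$ and $B_E(r)$ with $r<\delta$. Your $\delta$ is determined by the two requirements $\eta(r)\leq c\rho$ and $\eta(r)C<1$, and while $c$ (the openness constant of $L$) is intrinsic, your $C$ (the operator-norm bound for the section $s$) is built from a splitting that depends on $H$, hence $C$ a priori depends on $H$. You flag this yourself at the end but do not close it, and as written the quantifier order of the lemma is not met. The gap is fillable: for $z\in L(H)$ write $z=\pi^k z'$ with $z'\in L(H)\setminus\pi L(H)$. Then $s(z')\in H\subseteq B_E(r)$ gives $\Vert s(z)\Vert_E\leq |\pi|^k r$, while $B_F(c\rho r)\subseteq L(H)$ forces $\Vert z'\Vert_F> |\pi|\,c\rho r$ (otherwise $\pi^{-1}z'$ would lie in $B_F(c\rho r)\subseteq L(H)$, contradicting $z'\notin\pi L(H)$), so $\Vert z\Vert_F>|\pi|^{k+1}c\rho r$ and therefore $\Vert s(z)\Vert_E/\Vert z\Vert_F< 1/(|\pi|\,c\rho)$. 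Taking $C=1/(|\pi|\,c\rho)$, which depends only on $f'(x)$ and $\rho$, makes $\delta$ genuinely uniform in $H$ and completes your argument.
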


In what follows, we will often use Lemma \ref{lem:padicprec} with
$\rho = 1$. It states in this particular case that 
\begin{equation}
\label{eq:padicprec2}
f(x + B_E(r)) = f(x) + f'(x)(B_E(r))
\end{equation}
as soon as $r$ is small enough. It is moreover possible to provide
an explicit upper bound on $r$ assuming that $f$ has more regularity.
The case of locally analytic functions is treated in \cite{padicprec}
in full generality. Nevertheless, for the application we have in mind, 
it will be enough to restrict ourselves to the simpler case of 
\emph{integral polynomial} functions. In order to proceed, we assume 
that $E$ is endowed with distingushed ``orthonormal'' 
basis\footnote{One can prove that such a basis always exists.}, that is 
a basis $(e_1, \ldots, e_n)$ with the property that $\Vert \sum_{i=1}^n 
x_i e_i \Vert_E = \max_{1\leq i \leq n} |x_i|$ for all 
families of $\lambda_i$'s lying in $K$. In other words, the choice of
this distingushed ``orthonormal'' basis defines a norm-preserving 
isomorphism between $E$ and $K^n$ endowed with the sup norm. We assume
similarly that we are given a distingushed ``orthonormal'' basis
$(f_1, \ldots, f_m)$ of $F$. Then any function $f : E \to F$ can
be written in our distinguished system of coordinates as follows:
$$f(x) = \sum_{j=1}^m F_j(x_1, \ldots, x_n) f_j
\quad \text{with} \quad x = \sum_{i=1}^n x_i e_i.$$

\begin{deftn}
The function $f$ is \emph{integral polynomial} if all $F_j$'s are
polynomials functions with coefficients in $\A$.
\end{deftn}

\begin{ex}
\label{ex:Gaussnorm}
Let us examine more closely the case of polynomial spaces since it will 
be considered repeadtly in the sequel. We take $E = K_{<n}[X]$ and $F = 
K_{<m}[X]$ and endow both with the Gauss norm, which is defined by:
\begin{align*}
\Vert a_0 + a_1 X + \cdots + a_{n-1} X^{n-1} \Vert_E & 
  = \max \big(|a_0|, |a_1|, \ldots, |a_{n-1}|\big) \\
\Vert b_0 + b_1 X + \cdots + b_{m-1} X^{m-1} \Vert_F & 
  = \max \big(|b_0|, |b_1|, \ldots, |b_{m-1}|\big)
\end{align*}
It is clear from these definitions that the canonical basis $(1, X, 
\ldots, X^{n-1})$ and $(1, X, \ldots, X^{m-1})$ of $E$ and $F$ 
respectively are ``orthonormal''. Moreover the coordinates in these
basis are the $a_i$'s and the $b_i$'s respectively. Hence, an integral
polynomial function $f : E \to F$ is nothing but a function mapping a
a polynomial $P$ to a polynomial $Q$ whose coefficients are given by
polynomial expressions which involve only the coefficients of $P$ and 
some constants in $\A$.
\end{ex}

Obviously, all integral polynomial functions are function of class $C^1$ 
(and even locally analytic), so that Lemma~\ref{lem:padicprec} applies 
to them. Proposition \ref{prop:padicprec} below exhibits an explcit 
value for the bound $\delta$ appearing in Lemma~\ref{lem:padicprec} when 
$f$ is integral polynomial and $r=1$.

\begin{prop}
\label{prop:padicprec}
Let $f : E \to F$ be an integral polynomial function and $x \in B_E(1)$. 
Then, Eq.~\eqref{eq:padicprec2} holds as soon as 
$B_F(r) \subset f'(x)(B_E(1))$.
\end{prop}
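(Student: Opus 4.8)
The goal is to make the bound $\delta$ in Lemma~\ref{lem:padicprec} explicit (with $\rho = r = 1$) for integral polynomial functions: namely, we want to show $f(x + B_E(1)) = f(x) + f'(x)(B_E(1))$ whenever $B_F(r) \subset f'(x)(B_E(1))$. The strategy is to prove the two inclusions separately, and for this we should Taylor-expand $f$ around $x$. Because $f$ is integral polynomial and $x \in B_E(1)$, writing $f(x+h) = f(x) + f'(x)(h) + g(h)$ where $g$ collects the higher-order terms, we observe that $g$ is itself given by polynomials with coefficients in $\A$ and with no constant or linear term. The key elementary estimate is then that $\Vert g(h) \Vert_F \leq \Vert h \Vert_E^2 \leq \Vert h \Vert_E$ for $h \in B_E(1)$: every monomial in $g$ has total degree $\geq 2$ in the coordinates of $h$, and the coefficients are in $\A$ (norm $\leq 1$), so by the ultrametric inequality the norm of $g(h)$ is bounded by the max of the norms of these monomials, each of which is $\leq \Vert h\Vert_E^2$. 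In particular $g$ maps $B_E(1)$ into $B_F(1) \subseteq f'(x)(B_E(1))$, using the hypothesis $B_F(r) \subset f'(x)(B_E(1))$ together with the fact that $f'(x)(B_E(1))$ is a lattice (hence contains $B_F(1)$ once it contains some ball $B_F(r)$ with $r \leq 1$; if $r > 1$ one shrinks it first).

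For the inclusion $f(x + B_E(1)) \subseteq f(x) + f'(x)(B_E(1))$: take $h \in B_E(1)$. Then $f(x+h) - f(x) = f'(x)(h) + g(h)$, and $f'(x)(h) \in f'(x)(B_E(1))$ while $g(h) \in B_F(1) \subseteq f'(x)(B_E(1))$ by the estimate above. Since $f'(x)(B_E(1))$ is a $\A$-module, the sum lies in it.

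For the reverse inclusion $f(x) + f'(x)(B_E(1)) \subseteq f(x + B_E(1))$: given $y \in B_E(1)$, we must find $h \in B_E(1)$ with $f(x+h) - f(x) = f'(x)(y)$, i.e. $f'(x)(h) + g(h) = f'(x)(y)$. This is a fixed-point problem: rewrite it, using surjectivity of $f'(x)$ and a right inverse $L : F \to E$ of $f'(x)$ with $L(B_F(1)) \subseteq B_E(1)$ (such $L$ exists precisely because $B_F(1) \subseteq f'(x)(B_E(1))$, so one can choose preimages of basis vectors inside $B_E(1)$), as $h = y - L(g(h))$. Define $\Phi(h) = y - L(g(h))$ on $B_E(1)$; it maps $B_E(1)$ into itself since $L(g(h)) \in B_E(1)$, and it is a contraction because $g(h) - g(h')$ consists of terms of degree $\geq 2$ so $\Vert g(h) - g(h')\Vert_F \leq \Vert h - h'\Vert_E \cdot \max(\Vert h\Vert_E, \Vert h'\Vert_E) < \Vert h-h'\Vert_E$ — here one needs a small amount of care since on the closed unit ball the naive bound only gives $\leq$, not strict $<$; the standard fix is that the higher-order difference has an extra factor of norm $< 1$, or alternatively one invokes completeness of $B_E(1)$ and the fact that $\Phi$ strictly contracts on the open ball while iterating from $y$. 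Completeness of $\A$ (hence of $B_E(1)$) then yields a fixed point $h$, and by construction $\Vert h \Vert_E \leq 1$ and $f(x+h) = f(x) + f'(x)(y)$.

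The main obstacle is the contraction/fixed-point step: one must argue carefully that the higher-order remainder $g$ genuinely contracts on the \emph{closed} unit ball, rather than only on an open ball, so that the Banach fixed-point argument applies cleanly; the ultrametric setting actually helps here, since $\Vert g(h) - g(h')\Vert_F \leq \Vert h - h'\Vert_E \cdot \max_i \Vert h\Vert_E^{?}$ with the relevant exponent at least $1$ forces strict inequality as soon as $h \neq 0$, and the $h=0$ case is trivial — but this needs to be spelled out. Everything else is a routine consequence of the ultrametric inequality and the integrality of the coefficients.
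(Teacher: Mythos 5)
The paper itself disposes of this proposition by citing \cite[Proposition~3.12]{padicprec} as a ``direct corollary,'' so there is no internal argument to compare against; your Taylor-plus-Newton strategy is nevertheless the natural route one would try. The difficulty is that you have replaced $B_E(r)$ by $B_E(1)$ in the conclusion: Eq.~\eqref{eq:padicprec2}, read with the $r$ of the hypothesis, asserts $f(x+B_E(r))=f(x)+f'(x)(B_E(r))$, whereas you set out to prove $f(x+B_E(1))=f(x)+f'(x)(B_E(1))$. These are genuinely different statements, and yours is false. Take $\A=\Z_3$, $E=F=\Q_3$, $f(z)=z^2$, $x=1$. Then $f'(1)=2$ is a unit, so $f'(1)(B_E(1))=\Z_3$ and the hypothesis $B_F(r)\subset f'(x)(B_E(1))$ holds; a right inverse $L=\tfrac12$ does map $B_F(1)$ into $B_E(1)$ exactly as you require. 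Yet $f(1+\Z_3)$ is the set of squares of $3$-adic integers, a proper subset of $\Z_3=f(1)+f'(1)(\Z_3)$ (e.g.\ $2$ is not a $3$-adic square). This is precisely where your fixed-point argument breaks: here $\Phi(h)=y-\tfrac12 h^2$, so $\Vert\Phi(h)-\Phi(h')\Vert=\Vert h-h'\Vert\cdot\Vert h+h'\Vert$, which \emph{equals} $\Vert h-h'\Vert$ whenever $h+h'$ is a unit, and for $y=1$ the fixed-point equation becomes $(h+1)^2=3$, with no solution in $\Q_3$. There is no ``standard fix'' for the lack of strict contraction because the conclusion is false.

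There is also a flat error in the parenthetical claim that a lattice ``contains $B_F(1)$ once it contains some ball $B_F(r)$ with $r\leq 1$.'' This is wrong: $2\Z_2$ contains $B_F(1/2)$ but not $B_F(1)$. Worse, since $f$ is integral polynomial and $x\in B_E(1)$, the Jacobian $f'(x)$ has entries in the valuation ring, so $f'(x)(B_E(1))\subseteq B_F(1)$ automatically; asking for $B_F(1)\subseteq f'(x)(B_E(1))$ would force equality, i.e.\ unimodularity of $f'(x)$ modulo $\pi$, which the hypothesis does not give you. Both your forward and reverse inclusions lean on this false lattice claim. The correct statement lives on the small ball $B_E(r)$, and that is exactly why the Newton iteration can be salvaged there: for $\Vert h\Vert\leq r$ the quadratic remainder obeys $\Vert g(h)\Vert\leq r^{2}$, so $g(h)$ lands in a ball smaller than $B_F(r)$ by an extra factor that is absent in your version. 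It is this extra factor, together with $B_F(r)\subset f'(x)(B_E(1))$, that both places $g(h)$ inside $f'(x)(B_E(r))$ for the forward inclusion and gives the fixed-point map enough room to contract on $B_E(r)$. You should redo every estimate on $B_E(r)$, not $B_E(1)$.
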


\begin{proof}
It is a direct corollary of \cite[Proposition 3.12]{padicprec}.
\end{proof}

\subsubsection{Application to precision}

Let us now briefly explain how Lemma~\ref{lem:padicprec} can be utilized 
for tracking precision.

\paragraph{Tracking precision \emph{locally}}

Assume first that we want to perform a given rather simple operation --- 
corresponding, say, to an elementary step (\emph{e.g.} an iteration of 
the main loop) of the algorithm we are executing --- modeled by a 
function $g$ of class $C^1$ defined on an open subset $U$ of a finite 
dimensional normed $K$-vector space $E$ and taking values in another 
finite dimensional normed $K$-vector space $F$.
Our input is an approximated element of $U$ which is represented 
by a coset $C$ with respect to some lattice $H$, that is a subset of $U$ 
of the form $C = x+H$ for some $x \in U$. We would like to insist on the 
following: the value of $x$ is a priori \emph{not} given; only is given 
the subset $C$. However, since $H$ is stable under addtion, we have $C = 
x+H$ for \emph{any} element $x \in C$.\footnote{This assertion means that 
any element of the ``rectangle'' $C$ is a center of it... which might be 
surprising if we are accustomed to real numbers.} As explained in \S 
\ref{subsubsec:computDVR}, assuming that $g$ is given as an algebraic 
expression, the naive solution for evaluating $g(C)$ consists in using 
formulas \eqref{eq:precadd}--\eqref{eq:precdiv}. However, this often 
results in an overestimation on the precision, in the following sense: 
this method leads to some inclusion
$$g(C) = g(x+H) \subset y + H_{\text{naive}}$$
where $y \in F$ and $H_{\text{naive}}$ is a lattice which is generally 
much more larger that $g'(x)(H)$, the latter being the best possible
one according to
Lemma~\ref{lem:padicprec} (assuming that the assumptions of this Lemma
are fullfiled). In order to avoid this and be sharp on
precision, another solution consists in splitting the computation of 
$g(C)$ into two parts as follows:
\begin{enumerate}[(A)]
\item \label{item:gpx} compute $g'(x)(H)$, and
\item \label{item:gx} compute $g(x)$ for some $x \in C$.
\end{enumerate}
Part~\eqref{item:gpx} is not easy to handle in full generality: in order 
to be efficient, a special close analysis taking advantage of the 
particular problem under consideration is often necessary. For now, let 
us simply assume that we have given two lattices $H_\min$ and $H_\max$ 
with the property that:
\begin{equation}
\label{eq:Hminmax}
H_\min \subset g'(x)(H) \subset H_\max.
\end{equation}
We shall see later (\emph{cf} \S \ref{subsec:stabilization}) how these
lattices can be constructed --- for a negligible cost --- in the special 
case of subresultants. 

We now focus on part~\eqref{item:gx}, which also requires some 
discussion. Indeed, computing $g(x)$ is not straightforward because $x$ 
itself lies in a $K$-vector space and therefore cannot be stored and 
manipulated on a computer. Nevertheless, one can take advantage of the 
fact that $x$ may be chosen arbitrarily in $C$. More precisely, we pick 
a sublattice $H'$ of $H$ and consider the new approximated element $x+H' 
\subset x+H$. Concretely, this means that we arbitrarily increase the 
precision on the given input $x$. Now, applying the naive method with 
$x+H'$, we compute some $y \in F$ and some lattice $H'_{\text{naive}} 
\subset F$ with the property that:
$$g(x+H') \subset y + H'_{\text{naive}}.$$
If furthemore $H'$ is chosen in such a way that $H'_{\text{naive}} 
\subset H_\min$, the two cosets $y + g'(x)(H)$ and $g(C)$ have a 
non-empty intersection because $g(x)$ lies in both. Therefore they 
must coincide. We deduce that $y \in g(C)$. This exactly means that
$y$ is an acceptable value for $g(x)$ and we are done.
Moreover, estimating the dependance of $H'_{\text{naive}}$ in terms 
of $H'$ is usually rather easy (remember that $g$ is supposed to model
a simple operation). Hence since $H_\min$ is known --- as
we had assumed --- finding $H'$ satisfying the required assumption is
generally not difficult. 

\begin{figure}
\hfill
\begin{tikzpicture}[scale=1.2]

\begin{scope}[beige, rounded corners=10pt]
\fill (0.1,0.2) rectangle (3.9,5.8);
\fill (6.1,0.2) rectangle (9.9,5.8);
\end{scope}

\draw[->,very thick] (4.1,3)--(5.9,3)
  node[midway,above] { $g$ };

\begin{scope}[xshift=2cm,yshift=3.5cm,rotate=20]
\draw[black,fill=green!50] (-0.8,-1.8) rectangle (0.8,1.8);
\node[scale=0.75] at (0,0) { $x{+}H$ };
\end{scope}
\draw[black,fill=purple!50] (2,2.8) rectangle (2.8,2);
\node[scale=0.75] at (2.4,2.4) { $x{+}H'$ };

\begin{scope}[xshift=8cm,yshift=3cm]
\draw[black,fill=orange!50] (-1.7,-2.3) rectangle (1.7,1.8);
\node[above left,scale=0.75] at (1.7,-2.3) { $g(x){+}H_\max$ };
\draw[black,fill=green!50,rotate=-40] (-1,-1.3) rectangle (1,1.3);
\node[scale=0.75] at (0.15,1.2) { $g(x){+}{}$ };
\node[scale=0.75] at (0.1,0.9) { $g'(x)(H)$ };
\draw[black,fill=red!50] (-0.8,-0.8) rectangle (0.4,0.4);
\node[scale=0.75] at (-0.2,0.1) { $g(x){+}H_\min$ };
\draw[black,fill=purple!50] (-0.7,-0.7) rectangle (0.3,-0.3);
\node[scale=0.75] at (-0.2,-0.5) { $y{+}H'_{\text{naive}}$ };
\end{scope}

\begin{scope}[very thick, rounded corners=10pt]
\draw (0.1,0.2) rectangle (3.9,5.8);
\draw (6.1,0.2) rectangle (9.9,5.8);
\end{scope} 
\end{tikzpicture}
\hfill \null

\caption{Method for tracking precision based on Lemma~\ref{lem:padicprec}}
\label{fig:adaptative}
\end{figure}
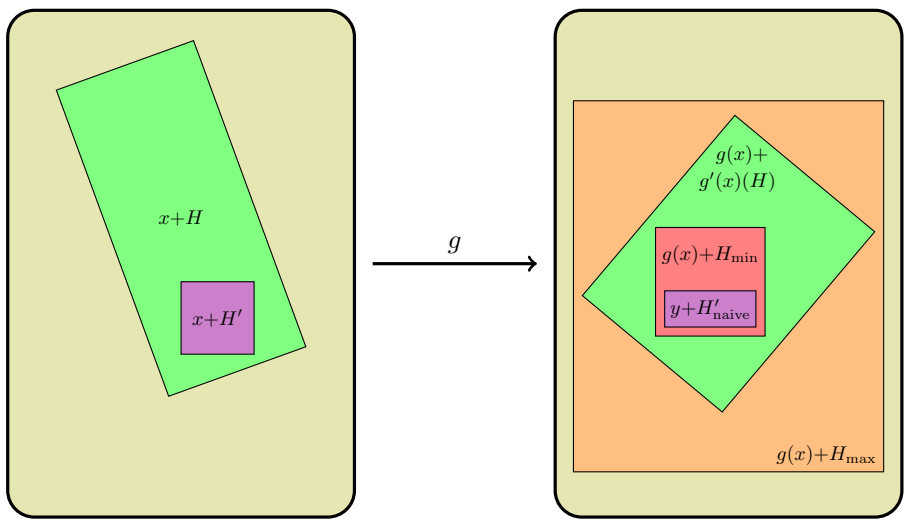

\paragraph{Tracking precision \emph{globally}}

As already said, we shall use the above method for tracking precision 
while executing a single step in a complete algorithm. Let us now 
address the problem of ``glueing''. We consider an algorithm \texttt{F} 
consisting in a succession of $n$ steps $\texttt{G}_0, \ldots, 
\texttt{G}_{n-1}$. It is modeled by a function $f : U \to F$ of class 
$C^1$ where $U$ is an open subset in a finite dimensional normed 
$K$-vector space $E$ and $F$ is a finite dimensional normed $K$-vector 
space. The input of \texttt{F} is an approximated element in $U$ 
represented as a coset $C = x+H$ where $x \in U$ and $H$ is a lattice.
We also introduce notations for each individual step. For all $i$, we
assume that $\texttt{G}_i$ is modeled by a function $g_i : U_i \to
U_{i+1}$ of class $C^1$ where $U_i$ is an open subset is 
some normed $K$-vector space $E_i$ and, by convention, $U_0 = U$, $E_0 
= E$ and $U_n = E_n = F$. We thus have:
  
$$f = g_{n-1} \circ g_{n-2} \circ \cdots \circ g_1 \circ g_0.$$
For all $i$, we set $f_i = g_{i-1} \circ \cdots \circ g_0$. It is the 
function modeling the execution of the $i$ first steps of our algorithm. 
We further define $x_i = f_i(x)$ and $H_i = f_i'(x)(H)$. The chain rule 
for composing differentials readily implies the recurrence
\begin{equation}
\label{eq:giprime}
H_{i+1} = g_i'(x_i)(H_i)
\end{equation}
For simplicity, we make the following assumptions:
\begin{itemize}
\item the $\Zp$-submodule $H_i$ is a lattice in $E_i$ such that $x_i + 
H_i \subset U_i$;
\item the triple $(g_i, x_i, H_i)$ satisfies the assumptions of 
Lemma~\ref{lem:padicprec};
\item for all $i$, we have succeeded in finding (good enough) explicit 
lattices $H_{\min,i}$ and $H_{\max,i}$ such that $H_{\min,i} \subset H_i 
\subset H_{\max,i}$;
\item for all $i$, we have succeeded in finding an explicit lattice
$H'_i$ such that, while tracking naively precision, we end up with an
inclusion
$$g_i(x_i + H'_i) = x_{i+1} + H_{\text{naive},i+1}$$
with $H_{\text{naive},i+1} \subset H_{\min,i+1}$.
\end{itemize}
We note that the first and the second assumptions are quite strong 
because they imply in particular that the sequence of $\dim E_i$ is 
non-increasing. However, it really simplifies the forthcoming discussion 
and will be harmless for the application developed in this paper. As
already mentionned, the construction of $H_{\min,i}$ and $H_{\max,i}$
will generally follow from a \emph{theoretical} argument depending on
the setting, while exhibiting $H'_i$ will often be straightforward.
Anyway, we are now in position to apply the method for 
tracking precision locally we have discussed earlier to all $g_i$'s. 
This leads to a \emph{stabilized} version of the algorithm \texttt{F} 
whose skeleton is depicted in Algorithm~\ref{algo:stabilizedF}.

\begin{algorithm}
  \SetKwInOut{Input}{Input}
  \SetKwInOut{Output}{Output}
  \Input{$x$ given at precision $O(H)$}
  \Output{$g(x)$ given at precision $O(H_{\max,n})$}
  
  \BlankLine

  $x_0 \leftarrow x$

  \For{$i = 0, \ldots, n-1$}
    {\textbf{lift} $x_i$ to precision $O(H'_i)$

     $x_{i+1} \leftarrow \texttt{G}_i(x_i)$}

  \Return{$x_n + O(H_{\max,n})$}
\caption{Stabilized version of \texttt{F}}
\label{algo:stabilizedF}
\end{algorithm}

The correctness of Algorithm~\ref{algo:stabilizedF} (under the 
assumptions listed above) is clear after Lemma~\ref{eq:padicprec}.

\subsection{Application to subresultants}
\label{subsec:stabilization}

We now apply the theory presented in \S \ref{subsec:crashcourse} above 
to the problem of computing subresultants, \emph{i.e.} the abstract 
Algorithm \texttt{F} is now instantiated to Algorithm~\ref{algo:subres}. 
We split this algorithm into steps in the obvious manner, each step 
corresponding to an iteration of the main loop. We thus consider the
functions:

$$\begin{array}{rrclc}
&g_d \, : \, K_d[X] \times K_d[X] &\to &K_d[X] \times K_{\leq d-1}[X] 
\smallskip \\
&(A,B) & \mapsto & (B, A-B) \bigskip \\
\text{and} &g_j \, : \, 
K_{\leq j+1}[X] \times K_{\leq j}[X]
& \to & K_{\leq j}[X] \times K_{\leq j-1}[X] \smallskip \\
&(R_{j+1}, R_j) & \mapsto & (R_j, R_{j-1})
& 
\end{array}$$
where $R_{j-1}$ is defined as usual by
$R_{j-1} = r_j^2 \cdot r_{j+1}^{-2} \cdot 
(R_{j+1} \,\%\, R_j)$
where $r_j$ (resp. $r_{j+1}$) stands for the coefficient of degree $j$ 
in $R_j$ (resp. of degree $j+1$ in $R_{j+1}$). We remark that $g_j$ is 
only defined on the subset consisting of pairs $(R_{j+1}, R_j)$ for 
which $R_{j+1}$ has degree $j+1$; this reflects the fact that 
Algorithm~\ref{algo:subres} fails on inputs for which at least one 
principal
subresultant vanishes. The composite function $f = g_1 \circ \cdots 
\circ g_d$ (be careful with the order of the indices) models (a slight 
variant of) Algorithm~\ref{algo:subres}. For all $j$, we put $f_j = 
g_{j+1} \circ \cdots \circ g_d$; it is the function:
$$\begin{array}{rcl}
f_j \, : \, K_d[X] \times K_d[X] & \to & K_{\leq j}[X] \times 
K_{\leq j-1}[X] \smallskip \\
(A,B) & \mapsto & (\Res_j(A,B), \Res_{j-1}(A,B)).
\end{array}$$

For simplicity, we assume in addition that the precision on the input 
$(A,B)$ is \emph{flat}, meaning that all coefficients of $A$ and $B$ are 
known with the same absolute precision $N$. In the language of \S 
\ref{subsec:crashcourse}, this flat precision corresponds to the lattice 
$H = \pi^N \calL$ where $\calL = \A_{<d}[X] \times \A_{<d}[X]$ is the 
unit ball in $K_d[X] \times K_d[X]$ with respect to the Gauss norm
(\emph{cf} Example~\ref{ex:Gaussnorm}). Following \S 
\ref{subsec:crashcourse}, our first task consists in finding two 
lattices $H_{\min,j}$ and $H_{\max,j}$ having the property that 
$H_{\min,j} \subset f'_j(A,B)(H) \subset H_{\max,j}$.

\begin{lem}
\label{lem:diff}
For all $(A,B) \in K_d[X]^2$, we have:
$$r_j^2 \cdot \mathcal L_j 
\subset f'_j(A,B)(\mathcal L) \subset \mathcal L_j$$
where $r_j$ is the $j$-th principal subresultant of $(A,B)$ and
$\calL_j = \A_{\leq j}[X] \times \A_{\leq j-1}[X]$ is the unit ball
in $K_{\leq j}[X] \times K_{\leq j-1}[X]$.
\end{lem}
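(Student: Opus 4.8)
The statement is a two-sided estimate, and the plan is to prove the two inclusions by somewhat different arguments.

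\emph{Upper inclusion $f'_j(A,B)(\mathcal{L})\subseteq\mathcal{L}_j$.} The key remark is that $f_j$ is an \emph{integral polynomial} function in the sense of \S\ref{subsec:crashcourse}: its two components $\Res_j(A,B)$ and $\Res_{j-1}(A,B)$ have coefficients equal, up to sign, to maximal minors of the Sylvester matrix of $A$ and $B$, and since $A$ and $B$ are monic of the fixed degree $d$, these minors are polynomials with coefficients in $\Z$ in the $2d$ free coefficients of $A$ and $B$; in the Gauss-norm coordinates of Example~\ref{ex:Gaussnorm} this is precisely the required form. Now the differential of an integral polynomial function at an integral point (as is the case in the intended application) maps the unit ball into the unit ball: writing $f'_j(A,B)(\delta A,\delta B)=\lim_{k\to\infty}\pi^{-k}\bigl(f_j(A+\pi^k\delta A,B+\pi^k\delta B)-f_j(A,B)\bigr)$, each term of the sequence is $\pi^k$ times a polynomial with coefficients in $\A$ evaluated at integral arguments, hence lies in $\mathcal{L}_j$; and $\mathcal{L}_j$ is closed.

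\emph{Lower inclusion $r_j^2\cdot\mathcal{L}_j\subseteq f'_j(A,B)(\mathcal{L})$.} It suffices to exhibit a local section $\tilde h$ of $f_j$ through $(A,B)$ whose differential at $y:=f_j(A,B)$ satisfies $r_j^2\cdot\tilde h'(y)(\mathcal{L}_j)\subseteq\mathcal{L}$: indeed differentiating $f_j\circ\tilde h=\mathrm{id}$ gives $f'_j(A,B)\circ\tilde h'(y)=\mathrm{id}$, and applying $f'_j(A,B)$ to the previous inclusion yields exactly $r_j^2\cdot\mathcal{L}_j\subseteq f'_j(A,B)(\mathcal{L})$. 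Such a section is produced, through the bijection of Proposition~\ref{prop:bijection}, by choosing the cofactors $(\mathcal{U}_j,\mathcal{U}_{j-1})$ as a suitable function of $(\mathcal{R}_j,\mathcal{R}_{j-1})$ staying inside $\mathcal{E}_j$ and then reading off $(A,B)$ from the formulas~\eqref{eq:systemAB}. The reason only the exponent $2$ should survive is the matrix form of the relations of Proposition~\ref{prop:relations},
\[
\begin{pmatrix} U_j & V_j \\ U_{j-1} & V_{j-1}\end{pmatrix}\begin{pmatrix} A \\ B\end{pmatrix}=\begin{pmatrix} R_j \\ R_{j-1}\end{pmatrix},\qquad \det\begin{pmatrix} U_j & V_j \\ U_{j-1} & V_{j-1}\end{pmatrix}=(-1)^{j+1}r_j^2,
\]
whose adjugate has entries among $U_j,U_{j-1},V_j,V_{j-1}$, themselves integral polynomial in $(A,B)$ because their coefficients are minors of the Sylvester matrix; so solving this $2\times2$ system for $(A,B)$, once the contribution of the derivatives of the cofactors has been absorbed into the choice of section, introduces only the denominator $\pm r_j^2$.

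\emph{The main obstacle} is precisely this last bookkeeping of powers of $r_j$. Applied naively, the inversion formulas~\eqref{eq:systemAB} carry the much larger denominator $r_j^{4(d-j-1)}$, coming from the auxiliary scalar $\alpha=r_j^{4j-4d+6}$; one has to show that the two defining equations of $\mathcal{E}_j$ — the linear constraint $\mathcal{U}_{j-1}[d{-}j]=(-1)^{j-1}\mathcal{R}_j[j]$ and the resultant constraint $\Res^{d-j,d-j-1}(\mathcal{U}_{j-1},\mathcal{U}_j)=-\mathcal{R}_j[j]^{2(d-j-1)}$ — force the cancellations that bring the exponent down to $2$, equivalently that the section $\tilde h$ above can be chosen with differential having denominators dividing $r_j^2$. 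A clean way to package everything is to establish, by functoriality and reduction to the generic monic pair, that $r_j^2$ times the appropriately normalised inverse of $\psi_j$, post-composed with the projection forgetting the cofactors, is itself an integral polynomial function; granting that, both inclusions of the lemma follow mechanically from Proposition~\ref{prop:padicprec} and the elementary properties of differentials of integral polynomial functions used above.
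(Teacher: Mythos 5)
Your upper inclusion is fine and is essentially the paper's (the paper dismisses it with ``clear because $f_j$ is a polynomial function''); you supply the detail, including the implicit restriction to integral base points, correctly.

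Your lower inclusion, however, is not a proof: it is a description of a proof strategy with the central step left open, and you acknowledge this yourself with ``one has to show that the two defining equations of $\mathcal{E}_j$ $\dots$ force the cancellations'' and again with ``granting that, both inclusions $\dots$ follow.'' The route you pick --- build a genuine local section $\tilde h$ of $f_j$ by first choosing $(\mathcal{U}_j,\mathcal{U}_{j-1})$ as a function of $(\mathcal{R}_j,\mathcal{R}_{j-1})$ landing in $\mathcal{E}_j$ and then applying $\varphi_j$ --- forces you through the scalar $\alpha=a^{4j-4d+6}$ of Proposition~\ref{prop:bijection}, and you are then stuck proving that the spurious powers of $r_j$ cancel down to $r_j^2$. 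That cancellation is precisely the content you were asked to establish, so pointing at it is not enough.

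The paper sidesteps the issue altogether. Rather than a section with varying cofactors, it reads off from the proof of Proposition~\ref{prop:bijection} the single \emph{linear} map
$$(S_j,S_{j-1}) \longmapsto (-1)^j r_j^{-2}\bigl(V_j S_{j-1}-V_{j-1}S_j,\ -U_j S_{j-1}+U_{j-1}S_j\bigr),$$
with $U_j,V_j,U_{j-1},V_{j-1},r_j$ frozen at their values at $(A,B)$. This is nothing but the inverse of the $2\times 2$ B\'ezout matrix of Proposition~\ref{prop:relations}, whose determinant is $\pm r_j^2$ and whose entries are the cofactors, hence integral when $A,B$ lie in $\A_d[X]$. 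There is consequently no $\alpha$ and no cancellation to chase: the denominator is $r_j^2$ by inspection, and this map already serves as the required right inverse of $f'_j(A,B)$, which is all that is needed to pull $r_j^2\mathcal{L}_j$ back inside $f'_j(A,B)(\mathcal{L})$. That is the idea your write-up is missing, and without it the lower inclusion remains unproved.
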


\begin{proof}
The second inclusion is clear because $f_j$ is a polynomial function. 
Let us prove the first inclusion. One may of course assume that $r_j$ 
does not vanish, otherwise there is nothing to prove. Now, we remark 
that $f_j$ factors through the function $\psi_j$ introduced in \S 
\ref{subsec:proof}. By continuity, the $j$-th principal subresultant 
function does not vanish on a neighborhood of $(A,B)$. By Proposition 
\ref{prop:bijection}, $\psi_j$ is injective on this neighborhood. 
Therefore so is $f_j$. Furthermore, a close look at the proof of 
Proposition \ref{prop:bijection} indicates that a left inverse of $f_j$ 
is the function mapping $(S_j, S_{j-1})$ to
$$(-1)^j \cdot r_j^{-2} \cdot 
(V_j S_{j-1}{-}V_{j-1} S_j,\: -U_j S_{j-1}{+}U_{j-1} S_j)$$
where $U_j, V_j$ (resp. $U_{j-1}$, $V_{j-1}$) are the $j$-th (resp 
$(j-1)$-th) cofactors of $(A,B)$. Differenting this, we get the
announced result.
\end{proof}

Lemma \ref{lem:diff} ensures that one can safely take $H_{\min,j} = 
r_j^2 \cdot \pi^N \calL_j$ and $H_{\max,j} = \pi^N \calL_j$. It finally 
remains to construct the lattice $H'_j \subset K_{\leq j}[X] \times 
K_{\leq j-1}[X]$. For this, we remark that a naive track of precision 
leads to a loss of at most $2 \cdot \val(r_{j+1})$ digits while 
executing the step $\texttt{G}_j$ (see also proof of Proposition 
\ref{prop:precEuclide} for similar considerations). Therefore, one can 
take $H'_j = r_j^2 r_{j+1}^2 \cdot \pi^N \calL_j$. Instantiating
Algorithm~\ref{algo:stabilizedF} in this particular case, we end up
with Algorithm~\ref{algo:stabsubres} below which then appears as a 
stable version of Algorithm~\ref{algo:subres}.

\begin{algorithm}
  \SetKwInOut{Input}{Input}
  \SetKwInOut{Output}{Output}
  \Input{Two polynomials $A, B \in K_d[X]$ given at flat precision $O(\pi^n)$}
  \Output{The sequence of subresultants of $A$ and $B$ given at flat precision $O(\pi^n)$}

  \BlankLine

  $R_d \leftarrow B$; $r_d \leftarrow 1$

  $R_{d-1} \leftarrow B-A$

  \For{$j = (d-1), (d-2), \ldots, 1$}
    {$r_j \leftarrow$ coefficient in $X^j$ of $R_j$

     \lIf{$v_j \geq \frac N 2$}{\textbf{raise} NotImplementedError}

     \textbf{lift} $(R_{j+1}, R_j)$ at precision $O(\pi^{N+2\val(r_j) + 2\val(r_{j+1}))})$

     $R_{j-1} \leftarrow \texttt{prem}(R_{j+1}, R_j) / r_{j+1}^2$
    }

  \Return{$R_{d-1} + O(\pi^N), \ldots, R_0 + O(\pi^N)$}
\caption{Stabilized version of Algorithm~\ref{algo:subres}}
\label{algo:stabsubres}
\end{algorithm}

\begin{prop}
Algorithm \ref{algo:stabsubres} computes all subresultants of
$(A,B)$ at precision $O(\pi^N)$ under the following 
assumption\footnote{If this assumption is not fullfiled, the
algorithms fails and returns an error.}

\medskip

\begin{tabular}{rl}
{\bf (H)}: & all principal subresultants of $(A,B)$ 
do not vanish modulo $\pi^{N/2}$.
\end{tabular}

\medskip

\noindent
It runs in $O(d^2 \cdot \M(N + \max(V_0, \ldots, V_{d-1}))$ bit 
operations where $V_j$ denotes the valuation of $r_j$ and $\M(n)$ is the 
number of bit operations needed to perform an arithmetic operation 
(addition, product, division) in $\A$ at precision $O(\pi^n)$.
\end{prop}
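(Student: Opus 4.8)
The plan is to recognize Algorithm~\ref{algo:stabsubres} as the concrete instantiation of the abstract Algorithm~\ref{algo:stabilizedF} prepared in \S\ref{subsec:stabilization}, to verify that assumption~(H) guarantees all the hypotheses needed for the correctness of the latter, and then to bound the cost of the main loop. Throughout, recall that~(H) puts us in the \emph{normal case}: since every principal subresultant $r_j$ of $(A,B)$ satisfies $\val(r_j) < N/2$, it is in particular nonzero, so all the $R_j$ have the expected degree $j$ and obey the recurrence~\eqref{eq:recRj}.

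\emph{Correctness.} Because every polynomial handled in the loop is known with flat precision at least $N$ (more, after the lifting step) and $\val(r_j) < N/2$, the valuations $v_j$ computed inside the algorithm coincide with the true ones; hence the test $v_j \geq N/2$ is never met and the algorithm does not fail. As $r_j \neq 0$, Lemma~\ref{lem:diff} gives $r_j^2\calL_j \subset f_j'(A,B)(\calL) \subset \calL_j$; in particular each $f_j'(A,B)$ is surjective and, since $f_{j-1} = g_j \circ f_j$, so is the differential of every elementary step $g_j$ at the point where it is applied. The same inclusions scaled by $\pi^N$ let one take $H_{\min,j} = r_j^2\pi^N\calL_j$ and $H_{\max,j} = \pi^N\calL_j$, and show that a perturbation of valuation $\geq N > \val(r_j)$ cannot kill a leading coefficient, so all the cosets stay inside the domains of the $g_j$'s. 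Thus the structural assumptions of \S\ref{subsec:crashcourse} are met.

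\emph{Precision bookkeeping of one step} --- the point I expect to require the most care, although it runs in the favourable direction compared with the proof of Proposition~\ref{prop:precEuclide}. The crucial observation is that \texttt{prem} is division-free: $\texttt{prem}(R_{j+1},R_j) = r_j^2 \cdot (R_{j+1}\,\%\,R_j)$ is obtained from $R_{j+1}$ and $R_j$ by the two reduction steps of a pseudo-division (two, because $\deg R_{j+1} - \deg R_j = 1$), using only multiplications by $r_j$ and subtractions; since $\val(r_j) \geq 0$, none of these operations lowers the flat precision, by Eq.~\eqref{eq:precsub}--\eqref{eq:precmul}. The only loss comes from the final division by $r_{j+1}^2$: Eq.~\eqref{eq:precdiv}, together with the fact that $R_{j-1}$ has coefficients in $\A$ (so the coefficients of $\texttt{prem}(R_{j+1},R_j)$ have valuation at least $2\val(r_{j+1})$), shows that an input known at flat precision $M$ yields $R_{j-1}$ at flat precision at least $M - 2\val(r_{j+1})$. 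Lifting the input to the flat precision $N + 2\val(r_j) + 2\val(r_{j+1})$ prescribed by the algorithm --- which is licit, since $\val(r_j),\val(r_{j+1}) \geq 0$ makes this a genuine increase of precision --- therefore produces $(R_j, R_{j-1})$ at flat precision at least $N + 2\val(r_j)$, i.e. inside $H_{\min,j}$. With all hypotheses of Algorithm~\ref{algo:stabilizedF} checked, Lemma~\ref{lem:padicprec} --- applied through Proposition~\ref{prop:padicprec} to the integral polynomial functions $f_j$, the regularity bound being met because $N > 2\val(r_j)$ --- shows that each computed value represents the true coset; since that coset lies inside $\pi^N\calL_j$, the truncation to $O(\pi^N)$ returned by the algorithm is correct.

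\emph{Complexity.} The main loop runs $d-1$ times. In iteration $j$ all polynomials have degree $\leq j+1 \leq d$, hence $O(d)$ coefficients, and one works at flat precision $N + 2\val(r_j) + 2\val(r_{j+1}) \leq N + 4\max(V_0,\ldots,V_{d-1})$; since $\M(cn) = O(\M(n))$ for a fixed constant $c$, every arithmetic operation in $\A$ then costs $O(\M(N + \max(V_0,\ldots,V_{d-1})))$ bit operations. Lifting $(R_{j+1},R_j)$, computing $\texttt{prem}(R_{j+1},R_j)$ via the two reduction steps above, and dividing the result by $r_{j+1}^2$ each use $O(d)$ such operations, while reading $r_j$ and computing $v_j$ is negligible. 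Hence each iteration costs $O(d \cdot \M(N + \max(V_0,\ldots,V_{d-1})))$ bit operations, and summing over the $d-1$ iterations gives the announced bound $O(d^2 \cdot \M(N + \max(V_0,\ldots,V_{d-1})))$.
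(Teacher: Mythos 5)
Your proof is correct and follows the same route as the paper: the paper's own proof is a three-sentence pointer back to the preceding analysis of \S\ref{subsec:stabilization} (where $H_{\min,j}$, $H_{\max,j}$ and $H'_j$ are constructed and Proposition~\ref{prop:padicprec} is invoked) plus the observation that the working precision is $N+O(\max V_j)$, and you essentially re-derive that material in expanded form. Two small points in your favour: you correctly apply Proposition~\ref{prop:padicprec} to the integral polynomial maps $f_j$ (the paper loosely says ``each $g_j$'', but the $g_j$'s involve division by $r_{j+1}^2$ and are not themselves integral polynomial), and you state the honest bound $N+4\max(V_j)$ on the working precision (the paper writes $N+2\max(V_j)$), observing correctly that the constant is absorbed by $\M(cn)=O(\M(n))$.
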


\begin{rem}
\label{rem:M}
In all usual examples ($p$-adic numbers, Laurent series), one can 
choose $\M(n)$ to be quasi-linear in $n$ and the size of the residue
field $k$.
\end{rem}

\begin{proof}
Correctness has been already proved (the assumption {\bf (H)} ensures
that Proposition~\ref{prop:padicprec} applies to each $g_j$).
As usual Euclide's algorithm, Algorithm 1 requires $O(d^2)$ operations 
in the base ring $\A$. Moreover, we observe that the maximal precision 
at which we are computing is upper bounded by $N + 2 \max(V_0, \ldots, 
V_{d-1})$. This justifies the announced complexity.
\end{proof}

According to Corollary \ref{cor:Vj}, the expected value of the variable 
$\max(V_0, \ldots, V_{d-1})$ is in $O(\log_p d)$. Thus, the average 
complexity of Algorithm 1 is $O(d^2 \cdot \M(N + \log d))$ bit 
operations. In all usual cases (\emph{cf} Remark \ref{rem:M}), this
complexity is also $\tilde O(d^2 N \cdot \log |k|)$ bit operations.

To conclude with, let us comment on briefly the hypothesis {\bf (H)}. We 
first remark that it is satisfied with high probability if $N$ is large 
compared to $2 \cdot \log_d p$. Thus, replacing eventually $N$ by $3 
\cdot \log_d p$ (which does not affect the complexity), the 
assumption~(b) is harmless on average --- but maybe not on particularly 
bad instances. We moreover underline that, if we are just interested in 
computing the $j$-th subresultant for a particular $j$, then we just 
need to assume the non-vanishing of the principal subresultants in the 
range $[j+1, d-1]$.

\subsubsection*{Open questions}
\label{subsec:questions}

The first hypothesis we would like to relax is of course {\bf (H)}. 
Actually, it seems quite plausible that one can produce a stabilized 
version of the ``complete''\footnote{\emph{I.e.} dealing with abnormal 
sequences as well.} subresultant pseudo-remainder sequence algorithm 
following the same strategy. Nevertheless, this extension is not 
completely straightforward because designing it requires to understand 
precisely how the coefficients $c_i$'s (appearing in 
Eq.~\ref{eq:subressequence}) alter the behaviour of the precision. We
therefore let it as an open question.

As it was presented, Algorithm~\ref{algo:stabsubres} only accepts inputs 
consisting of a pair of \emph{monic} polynomials having the same degree. 
It is actually not difficult to make it work with all couples of 
polynomials $(A,B)$ such that $\lc(B)$ is invertible in $\A$ and $\deg A 
\geq \deg B$. Indeed, it is enough for this to replace line 2 by:
$$R_{d-1} \leftarrow (-1)^{\deg A - \deg B} (A \,\%\, B).$$
However, writing an extension of Algorithm~\ref{algo:stabsubres} that 
accepts all inputs seems much more tricky and this is the second open
question we would like to point out.

Beyond this, one may wonder if one can use similar technics to compute 
not only subresultants but cofactors as well. For those indexes $j$ such 
that $r_j$ is invertible in $\A$, the same analysis applies almost 
\emph{verbatim}. However for other indexes $j$, the differential 
computation seems to be much more subtle. One can get around this issue 
by using lifting technics only when $r_j$ is a unit in $\A$ and tracking 
precision naively otherwise: it is possible to get this way a stable 
algorithm whose average running time is acceptable but which seems to be 
bad in the worst case. Can we do better?

Another quite interesting question is those of designing an algorithm 
which combines the precision technology developed in this paper with the 
``half-\textsc{gcd}'' methods. It is actually closely related to the 
previous question because ``half-\textsc{gcd}'' methods make an 
intensive use of cofactors in order to speed up the computation.

\section{Conclusion: towards $p$-adic floats}

When computing with real numbers, computers very often use floating 
point arithmetic. 
The rough idea of this model consists in representating all real numbers 
using the same number of digits (the so-called \emph{precision}) and to
apply rounding heuristics when final digits are unsettled.
In comparison with arithmetic interval, floating point arithmetic has
two main advantages. First, it allows simple and fast implementations.
Second, experiments show that the obtained results have generally
more much correct digits that those predicted by arithmetic interval.
The counterpart is that, expect on small examples, obtaining proved
results is generally intractable.

In the $p$-adic setting, the analogue of floating point arithmetic has 
not been developed yet. One reason for this is probably the well-known 
saying: ``in the $p$-adic world, rounding errors do not accumulate''. 
Consequently one might expect that interval arithmetic would provide 
sharp results. Nonetheless this hope is failing and examples are basic 
and numerous: $p$-adic differential equations \cite{bostan, padicdiff}, 
LU factorization \cite{LU}, SOMOS 4 sequence \cite{padicprec}, 
resultants (this paper), \emph{etc.} Consequently, interval arithmetic 
is not as good as one might have expected at first. Therefore, it 
probably makes sense to seriously study the analogue of floating point 
arithmetic in a ultrametric context.

Let us describe quickly what might be this analogue and what are its
advantages and disadvantages. We keep the notations of the previous
sections: the letter $\A$ denotes a complete discrete valuation ring
with uniformizer $\pi$ and $K$ is its fraction field.
In the model of ultrametric floating point arithmetic, we fix a
positive integer $N$ (the \emph{precision}) and represent elements 
of $K$ by approximations of the form:
\begin{equation}
\label{eq:padicfloat}
\pi^e \cdot \sum_{i=0}^{N-1} x_i \pi^i
\end{equation}
where $e$ is a relative integer and the $x_i$'s are elements of a fixed 
set of representatives of $\A$ modulo $\pi$ with the convention that the 
representative of $0 \in k$ is $0 \in \A$.
We further assume that $x_0 \neq 0$, \emph{i.e.} $e$ is the valuation
of the sum \eqref{eq:padicfloat}. We see that this framework is quite
similar to usual floating point arithmetics: the integer $e$ plays the
role of exponent, the uniformizer $\pi$ plays the role of the basis
and the value $\sum_{i=0}^{N-1} x_i \pi^i$ plays the role of the
significand (the mantissa). It remains to define operations $\oplus$
and $\odot$ on approximations modeling addition and multiplication
on $K$ respectively. We do this as follows: given $x$ and $y$ two
elements of $K$ of the form Eq.~\eqref{eq:padicfloat}, we compute
$x+y$ (resp. $xy$) in $K$, expand it as a convergent series 
$\sum_{i=v}^{\infty} s_i \pi^i$ (with $s_v \neq 0$) and define $x 
\oplus y$ (resp. $x \odot y$) by truncating the series at $i = v+N$.

Similarly to real floating point arithmetic, the main advantages of 
ultrametric floating point arithmetic are the simplicity and the 
efficiency while the counterpart is the difficulty to get proved results. 
Moreover, the aforementioned examples are evidences that ultrametric 
floating point arithmetic may often compute much more correct digits 
than those predicted by an analysis based on interval arithmetic. In 
order to illustrate this last assertion, let us go back to the case of 
resultants discussed earlier in this paper. Let $A$ and $B$ be two monic 
polynomials of degree $d$ (picked at random) whose coefficients are all 
known at precision $O(\pi^N)$. We have proved that if we are using the
model of interval arithmetic, then the subresultant pseudo-remainder
sequence algorithm will output $\Res(A,B)$ at precision 
$O(\pi^{N-N_{\text{int}}})$ where $N_{\text{int}}$ grows linearly 
with respect to $d$ on average. On the other hand, if we are using 
ultrametric floating point arithmetic, then the same algorithm will
output $\Res(A,B)$ at precision $O(\pi^{N-N_{\text{float}}})$ where 
$N_{\text{float}}$ grows linearly with respect to $\log d$ on average.
We emphasize furthermore that this result is \emph{proved}! From this
point of view, floating point arithmetics seems to behave better in
the ultrametric setting: we may hope to get proved results relatively
cheaply.

\end{document}